\def\[{\begin{equation}}
\def\]{\end{equation}}
\def\lb{\left(}
\def\rb{\right)}
\def\t{\top}
\def\A{{\mathcal A}}
\def\B{{\mathcal B}}
\numberwithin{equation}{section}
\begin{document}
\graphicspath{{./FIG/}}

\title{On the cone eigenvalue complementarity problem for higher-order tensors}


\author{Chen Ling \and Hongjin He \and Liqun Qi}


\institute{C. Ling\and H. He \at
Department of Mathematics, School of Science, Hangzhou Dianzi University, Hangzhou, 310018, China.\\
\email{cling\_zufe@sina.com}
\and H. He (\Letter) \at
\email{hehjmath@hdu.edu.cn}
\and L. Qi \at
Department of Applied Mathematics, The Hong Kong Polytechnic University, Hung Hom, Kowloon, Hong Kong. \\
\email{maqilq@polyu.edu.hk}
 }

\date{Received: date / Accepted: date}

\maketitle

\begin{abstract}
In this paper, we consider the {\it tensor generalized eigenvalue complementarity problem} (TGEiCP), which is an interesting generalization of matrix {\it eigenvalue complementarity problem} (EiCP). First, we given an affirmative result showing that TGEiCP is solvable and has at least one solution under some reasonable assumptions. Then, we introduce two optimization reformulations of TGEiCP, thereby beneficially establishing an upper bound of cone eigenvalues of tensors. Moreover, some new results concerning the bounds of number of eigenvalues of TGEiCP further enrich the theory of TGEiCP. Last but not least, an implementable projection algorithm for solving TGEiCP is also developed for the problem under consideration. As an illustration of our theoretical results, preliminary computational results are reported.

\keywords{Higher order tensor \and Eigenvalue complementarity problem \and Cone eigenvalue \and Optimization reformulation \and Projection algorithm.}


\subclass{15A18 \and 15A69 \and 65K15\and 90C30\and 90C33}
\end{abstract}

\section{Introduction}\label{Introd}
The complementarity problem has become one of the most well-established disciplines within mathematical programming \cite{FP03}, in the last three decades. It is not surprising that the complementarity problem has received much attention of researchers, due to its widespread applications in the fields of engineering, economics and sciences. In the literature, many theoretical results and efficient numerical methods were developed, we refer the reader to \cite{FP97} for an exhaustive survey on complementarity problems.

The {\it eigenvalue complementarity problem} (EiCP) not only is a special type of complementarity problems, but also extends the classical eigenvalue problem which can be traced back to more than 150 years (see \cite{GV00,VG97}). EiCP first appeared in the study of static equilibrium states of mechanical systems with unilateral
friction \cite{CFJM01}, and has been widely studied \cite{AR13,CS10,JRRS08,JSR07,JSRR09} in the last decade. Mathematically speaking, for two given square
matrices $A,B\in \mathbb{R}^{n\times n}$, EiCP refers to the task of finding a scalar $\lambda\in \mathbb{R}$ and a vector $x\in \mathbb{R}^n\backslash\{0\}$ such that
$$
0\leq x\perp w:=(\lambda B-A)x\geq 0.
$$
EiCPs are closely related to a class of differential inclusions with nonconvex processes defied by linear complementarity conditions, which serve as
models for many dynamical systems. Given a linear mapping $A:\mathbb{R}^n\rightarrow \mathbb{R}^n$, consider a dynamic system of the form:
\begin{equation}\label{MEDS}
\left\{
\begin{array}{l}
u(t)\geq 0,\\
\dot{u}(t)-Au(t) \geq 0,\\
\langle u(t),\dot{u}(t)-Au(t)\rangle=0.
\end{array}
\right.
\end{equation}
It is obvious that \eqref{MEDS} is equal to $\dot{u}(t)\in F(u(t))$, where the process $F : \mathbb{R}^n\rightarrow \mathbb{R}^n$ is given by
$${\rm Gr}( F):=\left\{(x,y)\in \mathbb{R}^n\times \mathbb{R}^n~|~ x \geq 0,  y-Ax \geq 0, \langle x, y-Ax\rangle= 0\right\}$$
and is nonconvex. As noticed
already by Rockafellar \cite{R79}, the change of variable
$u(t)= e^{\lambda t}v(t)$ leads to an equivalent system
$$
\lambda v(t)+\dot{v}(t)\in F(v(t)).
$$
This transformation efficiently utilizes the positive homogeneity of $F$. Therefore, if the pair $(\lambda, x)$ satisfies $\lambda x\in F(x)$, then the trajectory
$t\mapsto e^{\lambda t}x$ is a solution of dynamic system \eqref{MEDS}. Moreover, if such a trajectory is nonconstant, then $x$ must be a nonzero vector, which further implies that $(\lambda,x)$ is a solution of EiCP with $B:=I$ (i.e., $B$ is the identity matrix). The reader is referred to \cite{CFJM01,See99} for more details.

When $B$ is symmetric positive definite and $A$ is symmetric, EiCP is symmetric. In this case, it is well analyzed in \cite{QJH04} that EiCP is equivalent to finding a stationary point of a generalized Rayleigh quotient on a simplex. Generally speaking, the resulting equivalent optimization formulation is NP-complement \cite{C89,QJH04} and very difficult to be solved efficiently, and in particular when the dimension of the problem is large.

In the current numerical analysis literature, considerable interest has arisen in extending concepts
that are familiar from linear algebra to the setting of multilinear algebra. As a natural extension of the concept of matrices, a tensor, denoted by $\mathcal{A}$, is a multidimensional array, and its order is the number of dimensions. Let $m$ and $n$ be positive integers. We call
$\mathcal{A}= (a_{i_1\cdots i_m} )$, where $a_{i_1\cdots i_m} \in \mathbb{R}$ for $1\leq i_1, \ldots, i_m\leq n$, a real $m$-th order $n$-dimensional square tensor. The tensor $\mathcal{A}$ is further called symmetric if its entries are invariant under any permutation of their indices. The eigenvalues and eigenvectors of such square
tensor were introduced by Qi \cite{Q05}, and were introduced independently by Lim \cite{Lim05}.

For a vector $x = (x_1, \ldots,x_n)^\top\in \mathbb{C}^n$,  $\mathcal{A}x^{m-1}$ is an $n$-vector with its $i$-th component defined by
$$
(\mathcal{A}x^{m-1})_i=\sum_{i_2,\ldots,i_m=1}^na_{ii_2\cdots i_m}x_{i_2}\cdots x_{i_m},~~{\rm for~}i=1,2,\ldots,n,$$
and $\mathcal{A}x^m$ is a homogeneous polynomial defined by
$$\mathcal{A}x^{m}=\sum_{i_1,i_2,\ldots,i_m=1}^na_{i_1i_2\cdots i_m}x_{i_1}x_{i_2}\cdots x_{i_m}.$$
For given tenors $\mathcal{A}$ and $\mathcal{B}$ with same structure, we say that $(\mathcal{A},\mathcal{B})$ is an identical singular pair, if
$$
\left\{x\in \mathbb{C}^n\backslash\{0\}:\mathcal{A}x^{m-1}=0,\mathcal{B}x^{m-1}=0\right\}\neq\emptyset.
$$

\begin{definition} (\cite{CPZ09}) Let $\mathcal{A}$ and $\mathcal{B}$ be two $m$-th order $n$-dimensional tensors on $\mathbb{R}$. Assume that $(\mathcal{A},\mathcal{B})$ is not an identical singular pair.
We say $(\lambda, x)\in \mathbb{C}\times(\mathbb{C}^n\backslash\{0\})$ is an eigenvalue-eigenvector of $(\mathcal{A}, \mathcal{B})$, if the $n$-system of equations:
\begin{equation}\label{ABEigen}
(\mathcal{A}-\lambda \mathcal{B})x^{m-1}=0,
\end{equation}
that is,
$$
\sum_{i_2,\ldots,i_m=1}^n(a_{ii_2\cdots i_m}-\lambda b_{ii_2\cdots i_m})x_{i_2}\cdots x_{i_m}=0,~~~i=1,2,\ldots,n,$$
possesses a nonzero solution. Here, $\lambda$ is called a $\mathcal{B}$-eigenvalue of $\mathcal{A}$, and $x$ is called a $\mathcal{B}$-eigenvector of $\mathcal{A}$.
\end{definition}

With the above definition, the classical higher order {\it tensor generalized eigenvalue problem} (TGEiP) is to find a pair of $(\lambda,x)$ satisfying \eqref{ABEigen}. It is obvious that if $\mathcal{B}=\mathcal{I}$, the unit tensor $\mathcal{I}=(\delta_{i_1\cdots i_m})$, where $\delta_{i_1\cdots i_m}$ is the Kronecker symbol
$$
\delta_{i_1\cdots i_m}=\left\{
\begin{array}{ll}
1,&\;\;{\rm if~}i_1=\cdots =i_m,\\
0,&\;\;{\rm otherwise},
\end{array}
\right.
$$
then the resulting $\mathcal{B}$-eigenvalues reduce to the typical eigenvalues, and the real $\mathcal{B}$-eigenvalues with real eigenvectors are the $H$-eigenvalues, in the terminology of \cite{Q05,QSW07}.
In the literature, we have witnessed that tensors and eigenvalues/eigenvectors of tensors have fruitful applications in various fields such as magnetic resonance imaging \cite{BV08,QYW10}, higher-order Markov chains \cite{NQZ09} and
best-rank one approximation in date analysis \cite{QWW09}, whereby many nice properties such as the
Perron-Frobenius theorem for eigenvalues/eigenvectors of nonnegative square tensor have
been well established, see, e.g., \cite{CPZ08,YY10}.

In this paper, we consider the {\it tensor generalized eigenvalue complementarity problem} (TGEiCP), which can be mathematically characterized as finding a nonzero vector $\bar x\in \mathbb{R}^n$ and a scalar $\bar \lambda\in \mathbb{R}$ with property
\begin{equation}\label{GECP}
\bar x \in K,~~~\bar \lambda \mathcal{B} \bar x ^{m-1}-\mathcal{A}\bar x^{m-1}\in K^*, ~~~\langle \bar x,\bar \lambda \mathcal{B} \bar x^{m-1}-\mathcal{A}\bar x^{m-1}\rangle=0,
\end{equation}
where $\mathcal{A}$ and $\mathcal{B}$ are two given $m$-th order $n$-dimensional higher tensors, $K$ is a closed and convex cone in $\mathbb{R}^n$, and $K^*$ is the positive dual cone of $K$, i.e., $K^*:=\{w\in \mathbb{R}^n~:~\langle w,k\rangle\geq 0,\forall ~k\in K \}$. As EiCPs closely relate to differential inclusions with processes defined by linear
complementarity conditions, TGEiCPs are also closely related to a class of differential inclusions with nonconvex processes $H$ defined by
$$
{\rm Gr}( H):=\{(x,y)\in \mathbb{R}^n\times \mathbb{R}^n~|~ x \in K,  \mathcal{B}y^{m-1}-\mathcal{A}x^{m-1} \in K^*, \langle x, \mathcal{B}y^{m-1}-\mathcal{A}x^{m-1}\rangle= 0\}.
$$ The scalar $\lambda$ and the nonzero vector $x$ satisfying system (\ref{GECP}) are respectively called a $K$-eigenvalue
of $(\mathcal{A},\mathcal{B})$ and an associated $K$-eigenvector. In this situation, $(\lambda,x)$ is also called a $K$-eigenpair of $(\mathcal{A},\mathcal{B})$. The set of all eigenvalues is
called the $K$-spectrum of $(\mathcal{A},\mathcal{B})$, and it is defined by
$$\sigma_K(\mathcal{A},\mathcal{B}):=\{\lambda\in \mathbb{R}~:~\exists x\in \mathbb{R}^n\backslash\{0\},~K\ni x\perp \lambda \mathcal{B}x^{m-1}-\mathcal{A}x^{m-1}\in K^*\}.
$$
Throughout this paper one assumes that $K\cap (-K)=\{0\}$ and $\mathcal{B}x^m\neq 0$ for any $x\in K\backslash\{0\}$. If $K=\{x\in \mathbb{R}^n:x\geq 0\}$, then (\ref{GECP}) reduces to
\begin{equation}\label{SECP}
\bar x \geq 0,~~~\bar \lambda \mathcal{B} \bar x ^{m-1}-\mathcal{A}\bar x^{m-1}\geq 0, ~~~\langle \bar x,\bar \lambda \mathcal{B} \bar x^{m-1}-\mathcal{A}\bar x^{m-1}\rangle=0,
\end{equation}
which is a specialization of TGEiP. The scalar $\lambda$ and the nonzero vector $x$ satisfying system (\ref{SECP}) are called a Pareto-eigenvalue of $(\mathcal{A}, \mathcal{B})$ and an associated Pareto-eigenvector, respectively. The set of all Pareto-eigenvalues, defined by $\sigma(\mathcal{A},\mathcal{B})$, is called the Pareto-spectrum of $(\mathcal{A},\mathcal{B})$. If in addition $m=2$, the problem under consideration immediately reduces to the classical EiCP. If $\bar x\in {\rm int}(K)$ (respectively, $\bar x\in \{x\in \mathbb{R}^n:x>0\}$), then $\bar \lambda$ is called a strict $K$-eigenvalue (respectively, Pareto-eigenvalue) of $(\mathcal{A}, \mathcal{B})$. In particular, if $\mathcal{B}=\mathcal{I}$, then the $K$ (Pareto)-eigenvalue/eigenvector of $(\mathcal{A},\mathcal{B})$ is called the $K$ (Pareto)-eigenvalue/eigenvector of $\mathcal{A}$, and the $K$ (Pareto)-spectrum of $(\mathcal{A},\mathcal{B})$ is called the $K$ (Pareto)-spectrum of $\mathcal{A}$.

The main contributions of this paper are four folds. As we have mentioned in above, TGEiCP is an essential extension of EiCP. Accordingly, a natural question is that whether TGEiCP has solutions like EiCP. In this paper, we first give an affirmative answer to this question, thereby discussing the existence of the solution of TGEiCP \eqref{GECP} under some conditions. Note that TGEiCP is also a special case of complementarity problem, and it is well documented in \cite{FP03} that one of the most popular avenues to solve complementarity problems is reformulating them as optimization problems. Hence, we here also introduce two equivalent optimization reformulations of TGEiCP, which further facilitates the analysis of upper bound of cone eigenvalues of tensor. With the existence of the solution of TGEiCP, ones may be further interested in a truth that how many eigenvalues exist. Therefore, the third objective of this paper is to establish theoretical results concerning the bounds of the number of eigenvalues of TGEiCP. Finally, we develop a projection algorithm to solve TGEiCP, which is an easily implementable algorithm as long as the convex cone $K$ is simple enough in the sense that the projection onto $K$ has explicit representation. As an illustration of our theoretical results, we implement our proposed projection algorithm to solve some synthetic examples and report the corresponding computational results.

The structure of this paper is as follows. In Section \ref{SecExist}, the existence of solution for TGEiCP is discussed under some reasonable assumptions. Two optimization reformulations of TGEiCP are presented in Section \ref{Reformu}, and the relationship of TGEiCP with the optimization of the Rayleigh quotient associated to tensors is established. Moreover, based upon a reformulated optimization model, an upper bound of cone eigenvalues of tensor is also established. In Section \ref{Bound}, some theoretical results concerning the bounds of number of eigenvalues of TGEiCP are presented. To solve TGEiCP, we develop a so-called {\it scaling-and-projection algorithm} (SPA) and conduct some numerical simulations to support our results of this paper. Finally, we complete this paper with drawing some concluding remarks in Section \ref{SecCon}.

\medskip

\noindent{\bf Notation.} Let $\mathbb{R}^n$ denote the real Euclidean space of column vectors of length $n$. Denote $\mathbb{R}_+^n=\{x\in \mathbb{R}^n:x\geq 0\}$ and $\mathbb{R}_{++}^n=\{x\in \mathbb{R}^n:x> 0\}$. Let $\mathcal{A}$ be a tensor of order $m$ and dimension $n$, and $J$ be a subset of the index set $N:=\{1,2,\ldots,n\}$. We denote the principal sub-tensor of $\mathcal{A}$ by $\mathcal{A}_J$, which is obtained by homogeneous polynomial $\mathcal{A}x^m$ for all $x=(x_1,x_2,\ldots,x_n)^\top$ with $x_i=0$ for $N\backslash J$. So, $\mathcal{A}_J$ is a tensor of order $m$ and dimension $|J|$, where the symbol $|J|$ denotes the cardinality of $J$. For a vector $x\in \mathbb{R}^n$ and an integer $r\geq 0$, denote $x^{[r]}=(x_1^r,x_2^r,\ldots, x_n^r)^\top$.

\section{Existence of the solution for TGEiCP}\label{SecExist}
This section deals with the existence of the solution for TGEiCP. Let $K$ be a closed and convex pointed cone in $\mathbb{R}^n$. Recall that a nonempty set $S\subset \mathbb{R}^n$ generates a cone $K$ and write $K:={\rm cone}(S)$ if $K:=\{t s~:~s\in S,t\in \mathbb{R}_+\}$. If in addition $S$ does not contain zero and for each $k\in K\backslash \{0\}$, there exists unique $s\in S$ and $t\in \mathbb{R}_+$ such that $k=ts$, then we say that $S$ is a basis of $K$. Whenever $S$ is a finite set, ${\rm cone}({\rm conv}(S))$ is called a polyhedral cone, where ${\rm conv}(S)$ stands for the convex hull of $S$. Let $K$ be a closed convex cone equipped with a compact basis $S$. To study the existence of solution  for TGEiCP, we first make the following assumption.
\begin{assumption}\label{Assumpt1}
It holds that $\mathcal{B}x^{m}\neq 0$ for every vector $x\in S$.
\end{assumption}

\begin{remark}\label{Rem1}
It is easy to see that Assumption \ref{Assumpt1} holds if and only if one of the tensors $\mathcal{B}$ (or $-\mathcal{B}$) is strictly $K$-positive, i.e., $\mathcal{B}x^m>0$ (or $-\mathcal{B}x^m>0$) for any $x\in K\backslash\{0\}$. In particular, when $K=\mathbb{R}_+^n$, if $\mathcal{B}$ is a strictly copositive tensor (see \cite{Qi13,SQ15}), then $\mathcal{B}$ satisfies Assumption \ref{Assumpt1}. It is easy to see that if $\mathcal{B}$ is nonnegative, i.e., $\mathcal{B}\geq 0$, and there are no index subset $J$ of $N$  such that $\mathcal{B}_J$ is a zero tensor, then $\mathcal{B}x^{m}>0$ for any $x\in \mathbb{R}_+^n\backslash \{0\}$, and hence, in this case, Assumption \ref{Assumpt1} holds.
\end{remark}

From (\ref{GECP}), one knows that if $(\bar \lambda,\bar x)\in \mathbb{R}\times (\mathbb{R}^n\backslash\{0\})$ is a $K$-eigenpair of $(\mathcal{A},\mathcal{B})$, then necessarily
$$
\bar\lambda=\frac{\mathcal{A}\bar x^m}{\mathcal{B}\bar x^m},
$$
provided $\mathcal{B}\bar x^m\neq 0$. Consequently, by
the second expression of (\ref {GECP}), it holds that
$$
\frac{\mathcal{A}\bar x^{m}}{\mathcal{B}\bar x^{m}}\mathcal{B} \bar x ^{m-1}-\mathcal{A}\bar x^{m-1}\in K^*.
$$
We now present the existence theorem of TGEiCP, which is a particular instance of Theorem 3.3 in \cite{LS01}. However, for the sake of completeness, here we still present its proof.
\begin{theorem}\label{ExistTh}
Let $K$ be a cone equipped with convex compact basis $S$. If
Assumption \ref{Assumpt1} holds, then TGEiCP \eqref{GECP} has at
least one solution.
\end{theorem}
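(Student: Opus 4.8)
The plan is to exploit the positive homogeneity of the problem so as to normalize any candidate eigenvector to lie in the compact convex basis $S$, and then to recast the search for an eigenpair as a variational inequality over $S$ whose solvability is guaranteed by a classical existence theorem. The key preliminary observation, already recorded just before the statement, is that for a $K$-eigenpair the multiplier is forced to be the Rayleigh quotient $\lambda = \mathcal{A}x^m/\mathcal{B}x^m$. Turning this around, I would \emph{define} the scalar field $\lambda(x) := \mathcal{A}x^m/\mathcal{B}x^m$ and the vector field $F(x) := \lambda(x)\,\mathcal{B}x^{m-1} - \mathcal{A}x^{m-1}$ on $S$. Under Assumption \ref{Assumpt1} one has $\mathcal{B}x^m \neq 0$ on $S$; since $S$ is convex, hence connected, and $x \mapsto \mathcal{B}x^m$ is continuous and nonvanishing, it keeps a constant sign on $S$, so $\lambda(\cdot)$ and therefore $F(\cdot)$ are continuous on $S$.

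The crucial point of this construction is that the complementarity equation is satisfied \emph{automatically}: by the definition of $\lambda(x)$,
\[
\langle x, F(x)\rangle = \lambda(x)\,\mathcal{B}x^m - \mathcal{A}x^m = 0 \qquad \text{for every } x \in S.
\]
Consequently, producing a solution of TGEiCP reduces to finding a single $\bar x \in S$ with $F(\bar x) \in K^*$; the membership $\bar x \in K \setminus \{0\}$ is then free, because $S \subset K$ and $0 \notin S$ (as $S$ is a basis). To locate such an $\bar x$, I would invoke the existence theorem for variational inequalities over a nonempty compact convex set with a continuous operator (Hartman--Stampacchia): there exists $\bar x \in S$ such that $\langle F(\bar x), s - \bar x\rangle \geq 0$ for all $s \in S$.

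It then remains to read off the conclusion. Combining this variational inequality with the identity $\langle F(\bar x), \bar x\rangle = 0$ yields $\langle F(\bar x), s\rangle \geq 0$ for every $s \in S$. Because $S$ is a basis of $K$, every $k \in K$ has the form $k = ts$ with $t \geq 0$ and $s \in S$, so $\langle F(\bar x), k\rangle = t\,\langle F(\bar x), s\rangle \geq 0$, i.e.\ $F(\bar x) \in K^*$. Together with $\bar x \in K \setminus \{0\}$ and $\langle \bar x, F(\bar x)\rangle = 0$, this shows that $(\lambda(\bar x), \bar x)$ solves \eqref{GECP}, as desired.

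As for difficulty, essentially all the analytic content is packaged into the (standard) Hartman--Stampacchia result; the genuinely delicate steps are (i) checking that Assumption \ref{Assumpt1} is exactly what makes $\lambda(\cdot)$ and $F(\cdot)$ well defined and continuous, via the constant-sign argument on the connected set $S$, and (ii) the bookkeeping that converts the variational inequality over the basis $S$ into the dual-cone membership $F(\bar x) \in K^*$. I expect step (i)---ensuring that the Rayleigh quotient neither blows up nor changes sign on $S$---to be the main obstacle, since it is the only place where the hypotheses must be used in full.
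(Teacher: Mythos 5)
Your proposal is correct, and its skeleton coincides with the paper's: normalize candidate eigenvectors to the compact convex basis $S$, set $\lambda(x)=\mathcal{A}x^m/\mathcal{B}x^m$ (well defined and continuous on $S$ precisely because of Assumption~\ref{Assumpt1}), observe that the complementarity identity $\langle x,\lambda(x)\mathcal{B}x^{m-1}-\mathcal{A}x^{m-1}\rangle=0$ holds automatically, produce some $\bar x\in S$ with $\langle \lambda(\bar x)\mathcal{B}\bar x^{m-1}-\mathcal{A}\bar x^{m-1},y\rangle\geq 0$ for all $y\in S$, and finally pass from $S$ to $K^*$ using that $S$ generates $K$ and $0\notin S$. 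The one genuine difference is the classical theorem invoked at the central step: the paper applies the Ky Fan inequality \cite{AF09} to the bifunction $F(x,y)$ of \eqref{FxyF}, which is lower semicontinuous in $x$, linear (hence concave) in $y$, and vanishes on the diagonal, whereas you solve the variational inequality $\langle F(\bar x),s-\bar x\rangle\geq 0$ for all $s\in S$ via Hartman--Stampacchia and then use the diagonal identity $\langle F(\bar x),\bar x\rangle=0$ to drop the extra term. The two routes are essentially interchangeable---in your notation the paper's bifunction is exactly $-\langle F(x),y\rangle$, and its Ky Fan conclusion $\sup_{y\in S}F(\bar x,y)\leq 0$ is precisely the inequality you extract from the variational inequality; moreover both existence theorems are Brouwer/KKM-type results of the same depth, so neither route is more elementary. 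What your formulation buys is transparency for readers coming from complementarity problems, where existence of solutions to a variational inequality over a compact convex set with a continuous operator is the standard workhorse; what the paper's buys is a direct appeal to its cited toolbox. One correction of emphasis: your step (i) is lighter than you claim. Continuity of $\lambda(\cdot)$ and of $F(\cdot)$ on $S$ needs only that $\mathcal{B}x^m\neq 0$ for every $x\in S$, which is Assumption~\ref{Assumpt1} verbatim; the connectedness/constant-sign observation is true but is used nowhere in the argument, so it is not the delicate point you anticipated.
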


\begin{proof}
Define $F:S\times S\rightarrow \mathbb{R}$ by
\begin{equation}\label{FxyF}
F(x,y)=\langle\mathcal{A}x^{m-1},y\rangle-\frac{\mathcal{A}x^{m}}{\mathcal{B}x^{m}}\langle\mathcal{B} x ^{m-1},y\rangle.
\end{equation}
Since $\mathcal{B}x^{m}\neq 0$ for any $x\in S$, it is obvious that $F(\cdot,y)$ is lower-semicontinuous on $S$ for any fixed $y\in S$, and $F(x,\cdot)$ is concave on $S$ for any fixed $x\in S$. By the well-known Ky Fan inequality \cite{AF09}, there exists a vector $\bar x\in S\subset K\backslash\{0\}$ such that
\begin{equation}\label{eqFKF}
\sup_{y\in S}F(\bar x,y)\leq \sup_{y\in S}F(y,y).
\end{equation}
Consequently, since $F(y,y)=0$ for any $y\in S$, by (\ref{eqFKF}) it holds that $F(\bar x,y)\leq 0$ for any $y\in S$. Let $\bar \lambda=\frac{\mathcal{A}\bar x^{m}}{\mathcal{B}\bar x^{m}}$. Then, by (\ref{FxyF}), one knows that $\langle\bar \lambda\mathcal{B} \bar x ^{m-1}-\mathcal{A}\bar x^{m-1},y\rangle\geq 0
$ for any $y\in S$, which implies
\begin{equation}\label{GEPIN}
\bar \lambda\mathcal{B} \bar x ^{m-1}-\mathcal{A}\bar x^{m-1}\in K^*,
\end{equation} since for any $y\in K$ it holds that $y=t s$ for some $t\in \mathbb{R}_+$ and $s\in S$. Moreover, it is easy to know that
$$
\langle \bar x, \bar \lambda\mathcal{B} \bar x ^{m-1}-\mathcal{A}\bar x^{m-1}\rangle=0,
$$
which means, together with (\ref {GEPIN}) and the fact that $\bar x\in K\backslash\{0\}$, that $(\bar \lambda,\bar x)$ is a solution of (\ref{GECP}). We obtain the desired result and complete the proof.
\qed\end{proof}

From Theorem \ref{ExistTh}, we obtain the following corollary.
\begin{corollary}\label{Corollary1}
If $\mathcal{B}$ is strictly copositive, then \eqref{SECP} has at
least one solution.
\end{corollary}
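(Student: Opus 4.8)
The plan is to deduce this corollary directly from Theorem \ref{ExistTh} by specializing the cone to $K=\mathbb{R}_+^n$ and exhibiting a suitable convex compact basis. First I would observe that \eqref{SECP} is exactly the instance of TGEiCP \eqref{GECP} obtained by setting $K=\mathbb{R}_+^n$: since $\mathbb{R}_+^n$ is self-dual, we have $K^*=\mathbb{R}_+^n$, and the three conditions in \eqref{GECP} then collapse to those in \eqref{SECP}. Hence it suffices to verify that the hypotheses of Theorem \ref{ExistTh} hold for this choice of $K$.

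Next I would produce an explicit convex compact basis for $\mathbb{R}_+^n$, namely the standard simplex $S=\{x\in\mathbb{R}_+^n:\sum_{i=1}^n x_i=1\}$. This set is convex and compact, does not contain the origin, and satisfies $\mathbb{R}_+^n={\rm cone}(S)$; moreover each nonzero $k\in\mathbb{R}_+^n$ admits the unique representation $k=ts$ with $t=\sum_{i=1}^n k_i>0$ and $s=k/t\in S$, so $S$ is a basis in the sense defined just before Assumption \ref{Assumpt1}.

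It then remains only to check Assumption \ref{Assumpt1} for $\mathcal{B}$ on $S$. Since $\mathcal{B}$ is strictly copositive, we have $\mathcal{B}x^m>0$ for every $x\in\mathbb{R}_+^n\backslash\{0\}$, and in particular $\mathcal{B}x^m\neq 0$ for every $x\in S\subset\mathbb{R}_+^n\backslash\{0\}$; this is precisely the implication already recorded in Remark \ref{Rem1}. Therefore Assumption \ref{Assumpt1} holds, and applying Theorem \ref{ExistTh} with $K=\mathbb{R}_+^n$ and basis $S$ yields a solution of \eqref{GECP}, which is a solution of \eqref{SECP}.

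The argument is short and I do not expect a genuine obstacle. The only points requiring care are confirming that the simplex genuinely qualifies as a \emph{convex compact} basis (its compactness together with the uniqueness of the scaling representation) and that the self-duality of $\mathbb{R}_+^n$ makes \eqref{SECP} a literal special case of \eqref{GECP}. Both are routine, so the corollary follows essentially immediately from the existence theorem.
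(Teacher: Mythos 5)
Your proposal is correct and follows exactly the paper's own argument: the paper likewise takes $S=\{x\in\mathbb{R}_+^n:\sum_{i=1}^n x_i=1\}$ as a convex compact basis of $\mathbb{R}_+^n$ and invokes Theorem \ref{ExistTh}. The only difference is that you spell out the routine verifications (uniqueness of the representation $k=ts$, self-duality of $\mathbb{R}_+^n$, and that strict copositivity gives Assumption \ref{Assumpt1}) which the paper leaves implicit.
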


\begin{proof}
 Take $S:=\{x\in \mathbb{R}^n_+~|~\sum_{i=1}^n x_i=1\}$. It is clear that $S$ is a convex compact base of $\mathbb{R}_+^n$. By Theorem \ref{ExistTh}, it follows that the conclusion holds. The proof is completed.
\qed\end{proof}

The following example shows that Assumption \ref{Assumpt1} is necessary to ensure the existence of the solution of TGEiCP.
\begin{example}
Let $m=2$. Consider the case where
$$\mathcal{A}=\left(
\begin{array}{ccc}
1&&3\\
4&&1
\end{array}
\right)~~~~{\rm and}~~~~\mathcal{B}=\left(
\begin{array}{ccc}
1&&0\\
0&&-1
\end{array}
\right).
$$
It is easy to see that Assumption \ref{Assumpt1} does not hold for the above two matrices. Since ${\rm det}(\lambda \mathcal{B}-\mathcal{A})=-\lambda^2-11\neq 0$ for any $\lambda\in \mathbb{R}$, we claim that the system of linear equations $(\lambda \mathcal{B}-\mathcal{A})x=0$ has only one unique solution $0$ for any $\lambda\in\mathbb{R}$, which means that $(\lambda, x)\in \mathbb{R}\times \mathbb{R}_{++}^2$ satisfying \eqref{SECP} does not exist. Moreover, we may check that $(\lambda \mathcal{B}-\mathcal{A})x\geq 0$ does not hold for any $(\lambda, x)\in \mathbb{R}\times (\mathbb{R}_+^2\backslash\{0\})$ with $x=(x_1,0)^\top$ or $x=(0,x_2)^\top$. Therefore, problem \eqref{SECP} has no solution.
\end{example}

\section{Optimization reformulations of TGEiCP}\label{Reformu}
In this section, we study two optimization reformulations of (\ref{SECP}). We begin with introducing a so-called generalized Rayleigh quotient related to tensors. For two given $m$-th order $n$ dimensional tensors $\mathcal{A}$ and $\mathcal{B}$, the related Rayleigh quotient is defined by
\begin{equation}\label{lamx}
\lambda(x)=\frac{\mathcal{A}x^m}{\mathcal{B}x^m},
\end{equation}
where $\mathcal{B}x^m\neq 0$. If $m=2$, then $\lambda(x)$ defined by (\ref{lamx}) reduces to one introduced in \cite{QJH04}. When $\mathcal{A}$ is symmetric and
$\mathcal{B}$ is symmetric and strictly copositive, it is easy to see that the gradient of
$\lambda(x)$ is
\begin{equation}\label{lambdaGrad}
\nabla \lambda(x)=\frac{m}{\mathcal{B}x^m}[\mathcal{A}x^{m-1}-\lambda(x)\mathcal{B}x^{m-1}].
\end{equation}
Notice that the expression (\ref{lambdaGrad}) of the gradient
of the Rayleigh quotient is only valid when $\mathcal{A}$ and $\mathcal{B}$ are both symmetric. Moreover, in this case, the stationary points of $\lambda(x)$ correspond to solutions of (\ref{SECP}). If either $\mathcal{A}$ or $\mathcal{B}$ is not symmetric, the above expression of $\nabla \lambda(x)$ is incorrect, and the relationship between
stationary points and solutions of the TGEiCP with $K=\mathbb{R}_+^n$ ceases to hold.

The following lemma presents two fundamental properties of the generalized
Rayleigh quotient $\lambda$ in (\ref{lamx}), whose matrix version was proposed in \cite{QJH04}. Its proof is straightforward and skipped here.
\begin{lemma}\label{ECPLemma3}
For all $x\in \mathbb{R}^n\backslash \{0\}$, the following statements hold:
\begin{itemize}
\itemindent 8pt
\item[(1).] $\lambda(\tau x)=\lambda(x)$, ~~$\forall \tau >0$;
\item[(2).] $x^\top \nabla\lambda(x)=0$.
\end{itemize}
\end{lemma}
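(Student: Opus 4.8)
The plan is to exploit the homogeneity of the polynomial forms $\mathcal{A}x^m$ and $\mathcal{B}x^m$; both claims are essentially instances of the classical Euler relation for positively homogeneous functions, so no delicate analysis is needed.

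For part (1), I would first recall that $\mathcal{A}x^m$ and $\mathcal{B}x^m$ are homogeneous polynomials of degree $m$ in $x$, so that $\mathcal{A}(\tau x)^m=\tau^m\mathcal{A}x^m$ and $\mathcal{B}(\tau x)^m=\tau^m\mathcal{B}x^m$ for every $\tau>0$. Since $\mathcal{B}x^m\neq 0$ guarantees $\mathcal{B}(\tau x)^m=\tau^m\mathcal{B}x^m\neq 0$, the quotient $\lambda(\tau x)$ is well defined, and cancelling the common factor $\tau^m$ from numerator and denominator yields $\lambda(\tau x)=\lambda(x)$. Thus $\lambda$ is positively homogeneous of degree $0$ on its domain.

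For part (2), I would differentiate the identity from part (1) with respect to $\tau$. Writing $g(\tau):=\lambda(\tau x)$, the chain rule gives $g'(\tau)=\langle\nabla\lambda(\tau x),x\rangle$; but part (1) shows that $g$ is constant in $\tau$, so $g'(1)=0$, i.e. $x^\top\nabla\lambda(x)=\langle\nabla\lambda(x),x\rangle=0$. This is precisely Euler's theorem specialized to degree $0$. Alternatively, when $\mathcal{A}$ and $\mathcal{B}$ are symmetric one may substitute the explicit gradient \eqref{lambdaGrad} and invoke the contraction identities $x^\top\mathcal{A}x^{m-1}=\mathcal{A}x^m$ and $x^\top\mathcal{B}x^{m-1}=\mathcal{B}x^m$, together with the defining relation $\lambda(x)\mathcal{B}x^m=\mathcal{A}x^m$, to see that the bracketed term collapses to zero.

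The computation is elementary, so there is no genuine obstacle here; the only point deserving care is conceptual rather than technical. Part (2) is valid \emph{without} any symmetry hypothesis, since it rests solely on the degree-$0$ homogeneity established in part (1) (and on the differentiability of $\lambda$ as a ratio of polynomials with nonvanishing denominator), whereas the closed form \eqref{lambdaGrad} of $\nabla\lambda$ is available only in the symmetric case. I would therefore favor the homogeneity route, which keeps the proof self-contained and free of the symmetry assumption.
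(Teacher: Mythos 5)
Your proof is correct. There is nothing in the paper to compare it against: the authors explicitly declare the proof ``straightforward'' and skip it, so your homogeneity/Euler argument supplies exactly the standard reasoning that omission presupposes---part (1) from degree-$m$ homogeneity of $\mathcal{A}x^m$ and $\mathcal{B}x^m$, part (2) by differentiating $\tau\mapsto\lambda(\tau x)$ at $\tau=1$. Your closing remark is also a genuine (if small) improvement in precision: since the lemma is stated without symmetry hypotheses, part (2) must be read as a statement about the true gradient of the rational function $\lambda$, and your homogeneity route establishes it at that generality, whereas the alternative computation via the closed form \eqref{lambdaGrad} is available only when $\mathcal{A}$ and $\mathcal{B}$ are symmetric---a distinction the paper itself emphasizes just before stating the lemma.
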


We first consider the following optimization problem
\begin{equation}\label{OTEiCP}
\rho(\mathcal{A},\mathcal{B}):=\max_x\;\left\{\;\lambda(x)~|~x\in S\;\right\},
\end{equation}
where $\lambda(x)$ is defined in \eqref{lamx}, and the constraint set $S$ is determined by
\begin{equation}\label{Constrset}
S:=\left\{x\in \mathbb{R}_+^n~:~\sum_{i=1}^nx_i=1\right\},
\end{equation}
which is called the standard simplex in $\mathbb{R}^n$.

We generalize the result of symmetric EiCP studied in \cite{QJH04} to TGEiCP as the following proposition.
\begin{proposition}\label{proposition7} Assume that the tensors $\mathcal{A}$ and $\mathcal{B}$ are symmetric and $\mathcal{B}$ is strictly copositive. Let $\bar x$ be a stationary point of \eqref{OTEiCP}. Then $(\lambda(\bar x),\bar x)$ is a solution
of TGEiCP with $K=\mathbb{R}_+^n$.
\end{proposition}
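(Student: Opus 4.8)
The plan is to write down the first-order (KKT) stationarity conditions for the constrained maximization problem \eqref{OTEiCP} and then convert them into the complementarity system \eqref{SECP}, the decisive tools being the explicit gradient formula \eqref{lambdaGrad} (valid precisely because both $\mathcal{A}$ and $\mathcal{B}$ are symmetric) and the homogeneity identity of Lemma \ref{ECPLemma3}. First I would record that the feasible set $S$ in \eqref{Constrset} is cut out by the single equality $e^\top x = 1$ (with $e=(1,\ldots,1)^\top$) and the inequalities $x_i\ge 0$. Since $\bar x$ is a stationary point, there exist a multiplier $\nu\in\mathbb{R}$ for the equality and multipliers $\mu\in\mathbb{R}_+^n$ for the nonnegativity constraints with
$$\nabla\lambda(\bar x)=\nu e-\mu,\qquad \mu\ge 0,\qquad \mu_i\bar x_i=0\quad(i=1,\ldots,n).$$

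The next, and really central, step is to show that the equality multiplier vanishes. Taking the inner product of the stationarity equation with $\bar x$ and invoking part (2) of Lemma \ref{ECPLemma3}, namely $\bar x^\top\nabla\lambda(\bar x)=0$, together with feasibility $e^\top\bar x=1$ and complementary slackness $\sum_i\mu_i\bar x_i=0$, gives $0=\nu\,(e^\top\bar x)-\mu^\top\bar x=\nu$, hence $\nu=0$. Consequently $\nabla\lambda(\bar x)=-\mu\le 0$. This is the point I expect to be the main obstacle to get right: it is the homogeneity-forced identity $\nu=0$ that upgrades a bare stationarity condition into a genuine complementarity relation, so the sign bookkeeping in the KKT system must be handled carefully.

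I would then substitute the gradient formula \eqref{lambdaGrad},
$$\nabla\lambda(\bar x)=\frac{m}{\mathcal{B}\bar x^m}\left[\mathcal{A}\bar x^{m-1}-\lambda(\bar x)\mathcal{B}\bar x^{m-1}\right],$$
noting that $\bar x\in S\subset\mathbb{R}_+^n\setminus\{0\}$ (indeed $\bar x\neq 0$ since $e^\top\bar x=1$) and that strict copositivity of $\mathcal{B}$ yields $\mathcal{B}\bar x^m>0$, so the scalar factor $m/\mathcal{B}\bar x^m$ is strictly positive. Combined with $\nabla\lambda(\bar x)\le 0$ this forces $\mathcal{A}\bar x^{m-1}-\lambda(\bar x)\mathcal{B}\bar x^{m-1}\le 0$, that is,
$$\lambda(\bar x)\mathcal{B}\bar x^{m-1}-\mathcal{A}\bar x^{m-1}\ge 0,$$
which is exactly the dual-feasibility requirement of \eqref{SECP} for $K=\mathbb{R}_+^n$ (so $K^*=\mathbb{R}_+^n$); primal feasibility $\bar x\ge 0$ is immediate from $\bar x\in S$.

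Finally, for the complementarity condition I would compute, using $\langle\bar x,\mathcal{A}\bar x^{m-1}\rangle=\mathcal{A}\bar x^m$ and $\langle\bar x,\mathcal{B}\bar x^{m-1}\rangle=\mathcal{B}\bar x^m$ together with the definition $\lambda(\bar x)=\mathcal{A}\bar x^m/\mathcal{B}\bar x^m$ from \eqref{lamx},
$$\langle\bar x,\;\lambda(\bar x)\mathcal{B}\bar x^{m-1}-\mathcal{A}\bar x^{m-1}\rangle=\lambda(\bar x)\mathcal{B}\bar x^m-\mathcal{A}\bar x^m=0.$$
With $\bar x\ge 0$, the dual feasibility just derived, and this orthogonality all in hand, the three requirements of \eqref{SECP} are met, so $(\lambda(\bar x),\bar x)$ is a solution of TGEiCP with $K=\mathbb{R}_+^n$, completing the argument.
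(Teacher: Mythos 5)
Your proof is correct and follows essentially the same route as the paper: writing the KKT system for \eqref{OTEiCP}, using Lemma \ref{ECPLemma3}(2) to show the equality-constraint multiplier vanishes, and then combining the gradient formula \eqref{lambdaGrad} with strict copositivity of $\mathcal{B}$ to obtain dual feasibility and complementarity. The only differences are notational (your $\nu,\mu$ versus the paper's $-\bar\beta,\bar\alpha$), so there is nothing substantive to add.
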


\begin{proof}
Since $\bar x$ is a stationary solution of (\ref{OTEiCP}), from the structure of $S$, there exist $\bar \alpha\in \mathbb{R}^n$ and $\bar\beta\in \mathbb{R}$, such that
\begin{equation}\label{OptEiCP}
\left\{
\begin{array}{l}
-\nabla\lambda(\bar x)=\bar\alpha+\bar\beta e,\\
\bar\alpha\geq 0, \bar x\geq 0,\\
\bar \alpha^\top \bar x=0,\\
e^\top \bar x=1,
\end{array}
\right.
\end{equation}
where $e\in \mathbb{R}^n$ is a vector of ones. By (\ref{OptEiCP}), we know $-\bar x^\top\nabla\lambda(\bar x)=\bar\beta$, which implies, together with Lemma \ref{ECPLemma3} (2), that $\bar \beta=0$.
Consequently, from (\ref{lambdaGrad}), the first two expressions of (\ref{OptEiCP}) and the fact that $\mathcal{B}\bar x^m>0$, it holds that $\lambda(\bar x)\mathcal{B}\bar x^{m-1}-\mathcal{A}\bar x^{m-1}\geq 0$. This means, together with the fact that $\bar x\geq 0$ and $\bar x^\top(\lambda(\bar x)\mathcal{B}\bar x^{m-1}-\mathcal{A}\bar x^{m-1})=0$, that $(\lambda(\bar x),\bar x)$ is a solution of TGEiCP with $K=\mathbb{R}_+^n$. We complete the proof.
\qed\end{proof}

In what follows, we denote
$$
\lambda^{\rm max}_{\mathcal{A},\mathcal{B}}={\rm max}\left\{\lambda~:~\exists~x\in \mathbb{R}^n_+\backslash\{0\}{\rm~suct~that~}(\lambda,x)~{\rm is~a~ solution~ of ~(\ref{SECP})}\right\}
$$
for notational simplicity. Then, the following theorem characterizes the relationship between problem (\ref{OTEiCP}) and TGEiCP with $K:=\mathbb{R}_+^n$.

\begin{theorem}\label{ThforTEiCPOpt}
Let $\mathcal{A}$ and $\mathcal{B}$ be two $m$-th order $n$ dimensional symmetric tensors. If $\mathcal{B}$ is strictly copositive, then $\lambda^{\rm max}_{\mathcal{A},\mathcal{B}}=\rho(\mathcal{A},\mathcal{B})$.
\end{theorem}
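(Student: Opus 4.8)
The plan is to prove the equality by establishing the two inequalities separately, after first recording that both quantities are well defined. Since $\mathcal{B}$ is strictly copositive, Corollary \ref{Corollary1} guarantees that \eqref{SECP} admits at least one solution, so the Pareto-spectrum $\sigma(\mathcal{A},\mathcal{B})$ is nonempty; moreover strict copositivity gives $\mathcal{B}x^m>0$ for every $x\in\mathbb{R}_+^n\backslash\{0\}$, so $\lambda(x)$ in \eqref{lamx} is continuous on the compact simplex $S$ of \eqref{Constrset}, and the maximum defining $\rho(\mathcal{A},\mathcal{B})$ in \eqref{OTEiCP} is attained at some $\bar x\in S$.

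First I would show $\lambda^{\rm max}_{\mathcal{A},\mathcal{B}}\leq\rho(\mathcal{A},\mathcal{B})$. Let $(\lambda,x)$ be any solution of \eqref{SECP} with $x\in\mathbb{R}_+^n\backslash\{0\}$. Taking the inner product of the middle relation of \eqref{SECP} with $x$ and using the complementarity condition forces $\lambda\,\mathcal{B}x^m-\mathcal{A}x^m=0$, and since $\mathcal{B}x^m>0$ this gives $\lambda=\mathcal{A}x^m/\mathcal{B}x^m=\lambda(x)$. Setting $t:=(\sum_{i}x_i)^{-1}>0$, the point $tx$ lies in $S$, and Lemma \ref{ECPLemma3}(1) yields $\lambda=\lambda(x)=\lambda(tx)\leq\max_{y\in S}\lambda(y)=\rho(\mathcal{A},\mathcal{B})$. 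As this holds for every Pareto-eigenvalue, the Pareto-spectrum is bounded above by $\rho(\mathcal{A},\mathcal{B})$, whence $\lambda^{\rm max}_{\mathcal{A},\mathcal{B}}\leq\rho(\mathcal{A},\mathcal{B})$.

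For the reverse inequality I would show that $\rho(\mathcal{A},\mathcal{B})$ is itself a Pareto-eigenvalue. The maximizer $\bar x\in S$ is, in particular, a stationary point of \eqref{OTEiCP}: because $S$ is defined by affine constraints a constraint qualification holds automatically, and $\lambda$ is differentiable near $S$ (its denominator being positive there), so the first-order conditions \eqref{OptEiCP} hold at $\bar x$. Proposition \ref{proposition7} then certifies that $(\lambda(\bar x),\bar x)$ solves TGEiCP with $K=\mathbb{R}_+^n$, i.e. $\rho(\mathcal{A},\mathcal{B})=\lambda(\bar x)\in\sigma(\mathcal{A},\mathcal{B})$, hence $\rho(\mathcal{A},\mathcal{B})\leq\lambda^{\rm max}_{\mathcal{A},\mathcal{B}}$. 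Combining the two inequalities gives $\lambda^{\rm max}_{\mathcal{A},\mathcal{B}}=\rho(\mathcal{A},\mathcal{B})$.

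\textbf{Main obstacle.} I anticipate that the delicate point is not either inequality in isolation but the bookkeeping that makes $\lambda^{\rm max}_{\mathcal{A},\mathcal{B}}$ a genuine maximum: the scaling argument shows the Pareto-spectrum is bounded above by $\rho(\mathcal{A},\mathcal{B})$, while Proposition \ref{proposition7} shows $\rho(\mathcal{A},\mathcal{B})$ is attained within the spectrum, and only together do they justify the equality and the attainment. A secondary point requiring care is the justification that the global maximizer of $\lambda$ over $S$ satisfies the stationarity system \eqref{OptEiCP} invoked by Proposition \ref{proposition7}; this rests on the differentiability of $\lambda$ near $S$ (guaranteed by $\mathcal{B}x^m>0$ there, together with the gradient formula \eqref{lambdaGrad} valid for symmetric $\mathcal{A}$ and $\mathcal{B}$) and on the affine-constraint qualification, both of which are routine but should be stated explicitly.
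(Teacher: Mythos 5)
Your proof is correct and follows essentially the same route as the paper's: the inequality $\lambda^{\rm max}_{\mathcal{A},\mathcal{B}}\leq\rho(\mathcal{A},\mathcal{B})$ via complementarity plus the homogeneity property of Lemma \ref{ECPLemma3}(1) after rescaling onto the simplex, and the reverse inequality by noting the maximizer over the compact simplex is a stationary point of \eqref{OTEiCP} and invoking Proposition \ref{proposition7}. The only cosmetic difference is the justification of stationarity: the paper checks LICQ directly (linear independence of $\{e\}\cup\{e_i : i\in I(\bar x)\}$), whereas you appeal to the automatic constraint qualification for affine constraints; both are valid.
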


\begin{proof}
It is obvious that the constrained set $\Omega$ of (\ref{OTEiCP}) is compact, and hence there exists a vector $\bar x\in \Omega$ such that $\rho(\mathcal{A},\mathcal{B})=\lambda(\bar x)$. It is clear that $\{e\}\cup\{e_i~:~i\in I(\bar x)\}$ is linearly independent since $\bar x\neq 0$, where $I(\bar x)=\{i\in N~:~\bar x_i=0\}$. Consequently, the first order optimality condition of (\ref{OTEiCP}) holds, which means that $\bar x$ is stationary point of (\ref{OTEiCP}). By Proposition \ref{proposition7}, we know that $(\lambda(\bar x), \bar x)$  is a solution of TGEiCP with $K=\mathbb{R}_+^n$. Hence, it holds that $\rho(\mathcal{A},\mathcal{B})\leq \lambda^{\rm max}_{\mathcal{A},\mathcal{B}}$.

Let $(\lambda,x)$ be a solution of TGEiCP with $K:=\mathbb{R}_+^n$, then $\lambda =\mathcal{A}x^m/\mathcal{B}x^m$. Taking $y=x/(e^\top x)$ implies that $y\in \Omega$. By Lemma \ref{ECPLemma3} (1), we know $\lambda =\mathcal{A}y^m/\mathcal{B}y^m$, which implies that $\lambda\leq \rho(\mathcal{A},\mathcal{B})$ from the definition of $\rho(\mathcal{A},\mathcal{B})$. So, we have $\lambda^{\rm max}_{\mathcal{A},\mathcal{B}}\leq\rho(\mathcal{A},\mathcal{B})$.

Therefore, we obtain the desired result and complete the proof.
\qed\end{proof}

We now study another optimization reformulation of TGEiCP with $K:=\mathbb{R}_+^n$. We consider the following optimization problem
\begin{equation}\label{OptP}
\gamma(\mathcal{A}, \mathcal{B})=\max_x\left\{\mathcal{A}x^m~\big{|}~x\in \Sigma\right\},
\end{equation}
where $\Sigma:=\{x\in \mathbb{R}^n_+~:~\mathcal{B}x^m=1\}$ is assumed to be compact.

\begin{remark}\label{Remark2}
If $\mathcal{B}$ is strictly copositive, then we claim that $\Sigma$ is compact. Indeed, if $\Sigma$ is not compact, then there exists a sequence $\{x^{(k)}\}\subset \Sigma$ such that $\|x^{(k)}\|\rightarrow \infty$ as $k\rightarrow\infty$. Taking $y^{(k)}:=x^{(k)}/\|x^{(k)}\|$ clearly shows $y^{(k)}\in \mathbb{R}_+^n$ and $\|y^{(k)}\|=1$. Without loss of generality, we may assume that there exists a vector $\bar y\in \mathbb{R}_+^n$ satisfying $\|\bar y\|=1$, such that $y^{(k)}\rightarrow \bar y$ as $k\rightarrow\infty$. On the other hand, we have $\mathcal{B}(y^{(k)})^m=1/\|x^{(k)}\|^m$, which implies $\mathcal{B}\bar y^m=0$. It contradicts to the fact that $\mathcal{B}\bar y^m>0$, since $\bar y \in \mathbb{R}_+^n\backslash\{0\}$.
\end{remark}

For TGEiCP with $K:=\mathbb{R}_+^n$ and \eqref{OptP}, we have the following theorem which can be proved by a similar way to that used in \cite{SQ13}.
\begin{theorem}\label{OpECP}
Let $\mathcal{A}$ and $\mathcal{B}$ be two $m$-th order $n$ dimensional symmetric tensors. If $\mathcal{B}x^m>0$ for any $x\in \mathbb{R}^n_+\backslash\{0\}$, then $\lambda^{\rm max}_{\mathcal{A},\mathcal{B}}=\gamma(\mathcal{A},\mathcal{B})$.
\end{theorem}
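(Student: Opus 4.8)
The plan is to establish the two inequalities $\gamma(\mathcal{A},\mathcal{B}) \le \lambda^{\rm max}_{\mathcal{A},\mathcal{B}}$ and $\lambda^{\rm max}_{\mathcal{A},\mathcal{B}} \le \gamma(\mathcal{A},\mathcal{B})$ separately, mimicking the structure of the proof of Theorem \ref{ThforTEiCPOpt} but working with the constraint surface $\Sigma$ in \eqref{OptP} rather than the simplex $S$. The hypothesis $\mathcal{B}x^m>0$ on $\mathbb{R}^n_+\setminus\{0\}$ is precisely strict copositivity, so by Remark \ref{Remark2} the set $\Sigma$ is compact; since $\mathcal{A}x^m$ is continuous, a maximizer $\bar x\in\Sigma$ of \eqref{OptP} exists and satisfies $\mathcal{A}\bar x^m=\gamma(\mathcal{A},\mathcal{B})$.

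For the first inequality I would verify that the first-order optimality conditions apply at $\bar x$. Since $\mathcal{B}\bar x^m=1\neq0$ forces $\bar x\neq0$, writing $I(\bar x)=\{i\in N:\bar x_i=0\}$ I would check the constraint qualification by showing that $\{\mathcal{B}\bar x^{m-1}\}\cup\{e_i:i\in I(\bar x)\}$ is linearly independent: the contraction $\bar x^\top\mathcal{B}\bar x^{m-1}=\mathcal{B}\bar x^m=1>0$ guarantees that $\mathcal{B}\bar x^{m-1}$ has a nonzero entry in some coordinate $j$ with $\bar x_j>0$, hence it cannot lie in the span of the $e_i$ with $i\in I(\bar x)$. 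Using the symmetry of $\mathcal{A}$ and $\mathcal{B}$ to write $\nabla(\mathcal{A}x^m)=m\mathcal{A}x^{m-1}$ and $\nabla(\mathcal{B}x^m)=m\mathcal{B}x^{m-1}$, the KKT system at $\bar x$ then yields a multiplier $\mu\in\mathbb{R}$ and a vector $\eta\in\mathbb{R}^n_+$ with $\eta^\top\bar x=0$ such that $\mu\mathcal{B}\bar x^{m-1}-\mathcal{A}\bar x^{m-1}=\eta/m\ge0$.

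Contracting this identity with $\bar x$ and invoking $\mathcal{A}\bar x^m=\gamma$, $\mathcal{B}\bar x^m=1$ and $\eta^\top\bar x=0$ pins down $\mu=\gamma(\mathcal{A},\mathcal{B})$. Consequently $\gamma\mathcal{B}\bar x^{m-1}-\mathcal{A}\bar x^{m-1}=\eta/m\ge0$, and together with $\bar x\ge0$, $\bar x\neq0$ and the complementarity $\bar x^\top(\gamma\mathcal{B}\bar x^{m-1}-\mathcal{A}\bar x^{m-1})=0$ this exhibits $(\gamma,\bar x)$ as a solution of \eqref{SECP}; hence $\gamma(\mathcal{A},\mathcal{B})\le\lambda^{\rm max}_{\mathcal{A},\mathcal{B}}$.

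For the reverse inequality I would take any Pareto eigenpair $(\lambda,x)$ of \eqref{SECP} with $x\in\mathbb{R}^n_+\setminus\{0\}$, for which $\lambda=\mathcal{A}x^m/\mathcal{B}x^m$ with $\mathcal{B}x^m>0$. Using positive homogeneity, set $y:=x/(\mathcal{B}x^m)^{1/m}$; then $y\ge0$, $\mathcal{B}y^m=1$ so $y\in\Sigma$, and $\mathcal{A}y^m=\mathcal{A}x^m/\mathcal{B}x^m=\lambda$, whence $\lambda=\mathcal{A}y^m\le\gamma(\mathcal{A},\mathcal{B})$ by the definition of $\gamma$; maximizing over all such $\lambda$ gives $\lambda^{\rm max}_{\mathcal{A},\mathcal{B}}\le\gamma(\mathcal{A},\mathcal{B})$, and combining the two inequalities completes the proof. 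I expect the main obstacle to be the careful verification of the constraint qualification at $\bar x$ and the correct identification of the multiplier $\mu$ with $\gamma$; the homogeneity rescaling in the second half is routine.
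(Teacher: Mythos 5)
Your proof is correct and follows essentially the route the paper intends: the paper gives no explicit argument for Theorem~\ref{OpECP}, deferring to \cite{SQ13}, and that method is exactly your KKT argument on $\Sigma$, which also parallels the paper's own proof of Theorem~\ref{ThforTEiCPOpt} on the simplex $S$ (stationarity, identification of the multiplier by contracting with $\bar x$, then homogeneity for the reverse inequality). Your explicit verification of the constraint qualification at $\bar x$ via $\bar x^\top\mathcal{B}\bar x^{m-1}=\mathcal{B}\bar x^m=1>0$ is a detail the paper leaves implicit.
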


By Theorems \ref{ThforTEiCPOpt} and \ref{OpECP}, it follows that solving the largest Pareto eigenvalue of TGEiCP is an NP-hard problem in general, i.e., there are no polynomial-time algorithm for solving the largest Pareto eigenvalue of TGEiCP. In the rest of this section, based upon Theorem \ref{OpECP}, we further study the bound of Pareto eigenvalue of TGEiCP with $\mathcal{B}:=\mathcal{I}$ and $K:=\mathbb{R}_+^n$.

We denote by $\Omega^*$ the solution set of \eqref{SECP} with $\mathcal{B}:=\mathcal{I}$ and let
$$
|\lambda|^{\rm max}_{\mathcal{A}}={\rm max}\left\{\;|\lambda|\;:\;\exists~ x\in \mathbb{R}^n_+\backslash\{0\}{\rm~suct~that~}(\lambda,x)\in\Omega^*\;\right\}.
$$

\begin{theorem}\label{solutionbound}
Suppose $\mathcal{B}:=\mathcal{I}$. It holds that $$|\lambda|^{\rm max}_{\mathcal{A}}\leq {\rm min}\left\{n^{\frac{m-2}{2}}\|\mathcal{A}\|_F,\; \bar a\cdot n^{m-1}\right\},$$ where $\bar a:=\max\left\{\;|a_{i_1i_2\cdots i_m}|~:~1\leq i_1,i_2,\ldots, i_m\leq n\;\right\}$.
\end{theorem}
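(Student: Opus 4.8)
The plan is to reduce the statement to bounding a scale-invariant ratio, and then to treat the two terms in the minimum separately using only Cauchy--Schwarz, the power-mean inequality, and H\"older's inequality.

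First I would use the necessary condition recorded just before Theorem~\ref{ExistTh}: any Pareto-eigenpair $(\lambda,x)$ of $\mathcal{A}$ (the case $\mathcal{B}=\mathcal{I}$) satisfies $\lambda=\mathcal{A}x^m/\mathcal{I}x^m$. Since $\mathcal{I}x^m=\sum_{i=1}^n x_i^m>0$ for every $x\in\mathbb{R}^n_+\backslash\{0\}$ (as $m\ge 2$), the eigenvalue is
$$
|\lambda|=\frac{|\mathcal{A}x^m|}{\sum_{i=1}^n x_i^m}.
$$
This quotient is homogeneous of degree $0$ in $x$ (equivalently, scale-invariant by Lemma~\ref{ECPLemma3}(1)), so I may normalize $x$ whenever convenient. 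The task then reduces to bounding the numerator from above and the denominator from below.

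For the Frobenius bound I would normalize $\|x\|_2=1$. Writing $\mathcal{A}x^m=\langle\mathcal{A},x\otimes\cdots\otimes x\rangle$ and using $\|x\otimes\cdots\otimes x\|_F=\|x\|_2^m$, Cauchy--Schwarz gives $|\mathcal{A}x^m|\le\|\mathcal{A}\|_F\,\|x\|_2^m=\|\mathcal{A}\|_F$. For the denominator, the power-mean inequality (valid since $m\ge 2$) yields
$$
\Big(\tfrac1n\sum_i x_i^m\Big)^{1/m}\ge\Big(\tfrac1n\sum_i x_i^2\Big)^{1/2},
$$
whence $\sum_i x_i^m\ge n^{1-m/2}\big(\sum_i x_i^2\big)^{m/2}=n^{-(m-2)/2}$. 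Dividing the two estimates gives $|\lambda|\le n^{(m-2)/2}\|\mathcal{A}\|_F$. For the second bound I would instead exploit $x\ge 0$ to estimate the numerator entrywise, $|\mathcal{A}x^m|\le\bar a\sum_{i_1,\ldots,i_m}x_{i_1}\cdots x_{i_m}=\bar a\,\|x\|_1^m$, and combine with $\sum_i x_i^m=\|x\|_m^m$ to get $|\lambda|\le\bar a\,(\|x\|_1/\|x\|_m)^m$; then H\"older's inequality $\|x\|_1\le n^{(m-1)/m}\|x\|_m$ produces $|\lambda|\le\bar a\,n^{m-1}$. Taking the minimum of the two bounds over all Pareto-eigenvalues finishes the proof.

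The only nontrivial point (and hence the main obstacle) is securing the \emph{lower} bound on the denominator $\sum_i x_i^m$ with the correct exponent: one must apply the power-mean inequality in the right direction and observe that the uniform vector $x_i=1/\sqrt n$ is the extremal configuration pinning down the constant $n^{-(m-2)/2}$. Everything else is a routine application of Cauchy--Schwarz and H\"older, and the nonnegativity of $x$ is what makes the entrywise estimate for the $\bar a\cdot n^{m-1}$ bound legitimate.
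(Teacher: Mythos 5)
Your proposal is correct and follows essentially the same route as the paper's own proof: both start from $|\lambda|=|\mathcal{A}x^m|/\sum_i x_i^m$, obtain the Frobenius bound via Cauchy--Schwarz against the rank-one tensor $x\otimes\cdots\otimes x$ together with the power-mean inequality $\left(\sum_i x_i^2\right)^m\leq n^{m-2}\left(\sum_i x_i^m\right)^2$, and obtain the second bound via the entrywise estimate $|\mathcal{A}x^m|\leq\bar a\left(\sum_i x_i\right)^m$ combined with $\|x\|_1\leq n^{(m-1)/m}\|x\|_m$. The only cosmetic difference is your normalization $\|x\|_2=1$, where the paper works with the homogeneous (unnormalized) quotient directly.
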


\begin{proof}
Let $(\lambda,x)$ be an arbitrary solution of (\ref{SECP}) with $\mathcal{B}:=\mathcal{I}$. Then it  holds that
$$
\lambda=\frac{\mathcal{A}x^m}{\sum_{i=1}^nx_i^m},
$$
which implies
$$
|\lambda|=\frac{|\mathcal{A}x^m|}{\sum_{i=1}^nx_i^m}\leq \frac{\|\mathcal{A}\|_F\|x^m\|_F}{\sum_{i=1}^nx_i^m},
$$
where $x^m:=\left(x_{i_1}x_{i_2}\cdots x_{i_m}\right)_{1\leq i_1,\ldots,i_m\leq n}$, which is an $m$-th order $n$-dimensional tensor. Since $$\|x^m\|_F^2=\sum_{i_1,i_2,\ldots,i_m=1}^n (x_{i_1}x_{i_2}\cdots x_{i_m})^2=\left(\sum_{i=1}^n x_i^2\right)^m\leq n^{m-2}\left(\sum_{i=1}^n x_i^m\right)^2,$$ we obtain
$$
|\lambda|\leq n^{\frac{m-2}{2}}\|\mathcal{A}\|_F.
$$
On the other hand, we have
$$
|\lambda|=\frac{|\mathcal{A}x^m|}{\sum_{i=1}^nx_i^m}\leq \frac{\bar a\left(\sum_{i=1}^nx_i\right)^m}{\sum_{i=1}^nx_i^m}\leq \bar a\cdot n^{m-1}.
$$
Hence we know
$$
|\lambda|\leq {\rm min}\left\{n^{\frac{m-2}{2}}\|\mathcal{A}\|_F, \bar a\cdot n^{m-1}\right\}.
$$
By the arbitrariness of $\lambda$, we obtain the desired result and complete the proof.
\qed\end{proof}

For the case where $\mathcal{B}$ is strict copositive but $\mathcal{B}\neq \mathcal{I}$, by a similar way, we may obtain
$$
|\lambda^{\max}_{\mathcal{A},\mathcal{B}}|\leq \frac{1}{N_{\min}(\mathcal{B})} \min\left\{n^{\frac{m-2}{2}}\|\mathcal{A}\|_F, \bar a\cdot n^{m-1}\right\},
$$
where $N_{\rm min}(\mathcal{B})=\min\left\{\mathcal{B}x^m~:~x\in \mathbb{R}_+^n,\;\sum_{i=1}^nx_i^m=1\right\}>0$ by Theorem 5 in \cite{Qi13}. Notice that the computation of $N_{\rm min}(\mathcal{B})$ is also NP-hard itself.
\section{Bounds for the number of Pareto eigenvalues}\label{Bound}

In this section, we study the estimation of the numbers of Pareto-eigenvalue of $(\mathcal{A}, \mathcal{B})$, where $\mathcal{A}$ and $\mathcal{B}$ are two given $m$-th order $n$-dimensional tensors.  We begin this section with some basic concepts and properties of eigenvalue/eigenvector of tensors.

It is well known that, on the left-hand side of \eqref{ABEigen}, $(\mathcal{A}-\lambda \mathcal{B})x^{m-1}$ is indeed a set of $n$ homogeneous polynomials with $n$ variables, denoted
by $\{P^{\lambda}_i(x) ~|~ 1\leq i\leq n\}$, of degree $(m-1)$. In the complex field, to study the solution set of a system of $n$ homogeneous
polynomials $(P_1,\ldots,P_n)$, in $n$ variables, the idea of the resultant ${\rm Res}(P_1,\ldots, P_n)$ is well defined and introduced in algebraic geometry literature, we refer
to the recent monograph \cite{CLO05} for more details. Applying it to our current problem, ${\rm Res}(P_1,\ldots, P_n)$ has the following properties.
\begin{proposition}\label{Prop1} We have the following results:
\begin{itemize}
\itemindent 8pt
\item[(1).] ${\rm Res}(P_1,\ldots, P_n)=0$, if and only if there exists $(\lambda,x)\in \mathbb{C}\times(\mathbb{C}^n\backslash\{0\})$ such that satisfies \eqref{ABEigen}.
\item[(2).]  The degree of $\lambda$ in ${\rm Res}(P_1,\ldots, P_n)$ is at most $n(m-1)^{n-1}$.
\end{itemize}
\end{proposition}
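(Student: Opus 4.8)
The plan is to deduce both statements from the standard theory of the multivariate resultant of $n$ homogeneous polynomials in $n$ variables, as developed in \cite{CLO05}. Throughout I regard each $P_i^{\lambda}(x)$ as a homogeneous polynomial of degree $m-1$ in $x_1,\ldots,x_n$ whose coefficients are the affine-in-$\lambda$ quantities $a_{ii_2\cdots i_m}-\lambda b_{ii_2\cdots i_m}$, and I treat ${\rm Res}(P_1,\ldots,P_n)$ as the polynomial in the single indeterminate $\lambda$ obtained by substituting these coefficients into the generic resultant. All solutions are sought over the algebraically closed field $\mathbb{C}$, which is precisely the setting in which the resultant theory applies cleanly.

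For part (1), I would simply invoke the fundamental vanishing property of the resultant: for $n$ homogeneous polynomials $Q_1,\ldots,Q_n$ in $n$ variables over $\mathbb{C}$, one has ${\rm Res}(Q_1,\ldots,Q_n)=0$ if and only if the system $Q_1=\cdots=Q_n=0$ admits a nontrivial common zero $x\in\mathbb{C}^n\backslash\{0\}$ (equivalently, a zero in $\mathbb{P}^{n-1}$). Applying this with $Q_i=P_i^{\lambda}$, and recalling that $(\mathcal{A}-\lambda\mathcal{B})x^{m-1}=0$ is exactly the system $P_1^{\lambda}(x)=\cdots=P_n^{\lambda}(x)=0$, the vanishing of ${\rm Res}(P_1,\ldots,P_n)$ at a value of $\lambda$ is equivalent to the existence of a nonzero $x$ for which $(\lambda,x)$ satisfies \eqref{ABEigen}. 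This direction is essentially a citation and requires no work beyond matching notation.

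For part (2), I would use the homogeneity grading of the resultant. A second standard property states that ${\rm Res}(P_1,\ldots,P_n)$ is homogeneous, in the coefficients of $P_i$ alone, of degree $\prod_{j\neq i}d_j$, where $d_j=\deg P_j$. Here every $d_j=m-1$, so the resultant is homogeneous of degree $(m-1)^{n-1}$ in the coefficient block of each single $P_i$; summing over $i=1,\ldots,n$, its total degree in all the coefficients jointly equals $n(m-1)^{n-1}$. Consequently, every monomial appearing in ${\rm Res}$, viewed as a polynomial in the coefficients, is a product of exactly $n(m-1)^{n-1}$ coefficient factors. Since each coefficient $a_{ii_2\cdots i_m}-\lambda b_{ii_2\cdots i_m}$ has degree at most one in $\lambda$, the affine substitution turns every such monomial into a product of at most $n(m-1)^{n-1}$ linear factors in $\lambda$, hence into a polynomial of degree at most $n(m-1)^{n-1}$ in $\lambda$. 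Taking the maximum over monomials yields $\deg_{\lambda}{\rm Res}(P_1,\ldots,P_n)\le n(m-1)^{n-1}$, which is the claimed bound.

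The crux is not any single computation but the bookkeeping in part (2): I must state the per-block homogeneity degree of the resultant correctly, add the $n$ blocks to obtain the joint total degree $n(m-1)^{n-1}$, and observe that linearity of the coefficients in $\lambda$ passes this total degree through as an upper bound. The main conceptual caveat, which I would address explicitly, is guaranteeing that the substituted resultant is a genuinely nonzero polynomial in $\lambda$ so that its \emph{degree} is meaningful; the standing hypothesis that $(\mathcal{A},\mathcal{B})$ is not an identical singular pair excludes the degenerate case in which ${\rm Res}$ collapses identically in $\lambda$, although the upper bound itself holds regardless of this subtlety.
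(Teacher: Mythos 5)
Your proof is correct and takes essentially the same route as the paper, which states this proposition without proof as a direct citation of standard resultant theory from \cite{CLO05}: part (1) is the fundamental vanishing criterion, and part (2) follows, exactly as you argue, from the multi-homogeneity of the resultant (degree $(m-1)^{n-1}$ in the coefficient block of each $P_i$, hence joint total degree $n(m-1)^{n-1}$) combined with the affine dependence of each coefficient on $\lambda$. One small correction to your closing caveat: the hypothesis that $(\mathcal{A},\mathcal{B})$ is not an identical singular pair does \emph{not} by itself prevent ${\rm Res}$ from vanishing identically in $\lambda$ (already for matrices there are singular pencils, e.g. $A=\left(\begin{smallmatrix}0&1\\0&0\end{smallmatrix}\right)$, $B=\left(\begin{smallmatrix}1&0\\0&0\end{smallmatrix}\right)$, with $\det(A-\lambda B)\equiv 0$ but no common nonzero null vector), though, as you note, the degree bound holds vacuously in that case, so your proof of the stated proposition is unaffected.
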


For the considered TGEiCP with $K=\mathbb{R}_+^n$, we present the following proposition which fully characterizes the Pareto-spectrum of TGEiCP.

\begin{proposition}\label{ECharact} Let $\mathcal{A}$ and $\mathcal{B}$  be two $m$-th order $n$-dimensional tensors. A real number $\lambda$ is Pareto-eigenvalue of $(\mathcal{A},\mathcal{B})$, if and only if there exists a nonempty subset $J\subseteq  N$ and a vector $w\in \mathbb{R}_{++}^{|J|}$ such that
\begin{equation}\label{Keigenvalue}
\mathcal{A}_Jw^{m-1}=\lambda \mathcal{B}_Jw^{m-1}
\end{equation}
and
\begin{equation}\label{KeigenIn}
\sum_{i_2,\ldots,i_m\in J}(\lambda b_{ii_2\ldots i_m}-a_{ii_2\ldots i_m})w_{i_2}\cdots w_{i_m}\geq 0, ~~~{\rm for~every~}i\in N\backslash J.
\end{equation}
In such a case, the vector $x\in \mathbb{R}^n_+$ defined by
$$
x_i=\left\{
\begin{array}{ll}
w_i,&i\in J,\\
0,&i\in N\backslash J
\end{array}
\right.
$$
is a Pareto-eigenvector of $(\mathcal{A}, \mathcal{B})$, associated to the real number $\lambda$.
\end{proposition}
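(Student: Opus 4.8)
The plan is to prove both directions by identifying the support of a Pareto-eigenvector with the index set $J$ and then using the complementarity condition to split the coordinates of $\lambda\mathcal{B}x^{m-1}-\mathcal{A}x^{m-1}$ into those forced to vanish (the indices in $J$) and those only required to be nonnegative (the indices in $N\backslash J$). For the necessity direction, I would start from a Pareto-eigenpair $(\lambda,\bar x)$ solving \eqref{SECP} and set $J:=\{i\in N:\bar x_i>0\}$, which is nonempty since $\bar x\in\mathbb{R}_+^n\backslash\{0\}$. Writing $w:=(\bar x_i)_{i\in J}\in\mathbb{R}_{++}^{|J|}$, the three conditions $\bar x\geq 0$, $\lambda\mathcal{B}\bar x^{m-1}-\mathcal{A}\bar x^{m-1}\geq 0$, and $\langle\bar x,\lambda\mathcal{B}\bar x^{m-1}-\mathcal{A}\bar x^{m-1}\rangle=0$ together force the $i$-th coordinate of $\lambda\mathcal{B}\bar x^{m-1}-\mathcal{A}\bar x^{m-1}$ to equal zero for every $i\in J$ and to be nonnegative for every $i\in N\backslash J$.

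The crucial step is the reduction to the principal subtensors. Since $\bar x_j=0$ for every $j\in N\backslash J$, every term of $(\mathcal{A}\bar x^{m-1})_i=\sum_{i_2,\ldots,i_m=1}^n a_{ii_2\cdots i_m}\bar x_{i_2}\cdots\bar x_{i_m}$ that carries an index outside $J$ drops out, so the sum collapses to $\sum_{i_2,\ldots,i_m\in J}a_{ii_2\cdots i_m}w_{i_2}\cdots w_{i_m}$, and similarly for $\mathcal{B}$. Matching this against the definition of $\mathcal{A}_J$ and $\mathcal{B}_J$ from the notation section, the vanishing of the coordinates indexed by $J$ is exactly $\mathcal{A}_Jw^{m-1}=\lambda\mathcal{B}_Jw^{m-1}$, i.e. \eqref{Keigenvalue}, while the nonnegativity of the coordinates indexed by $N\backslash J$ is exactly \eqref{KeigenIn}. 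This bookkeeping is the only part demanding any care; it is where one must be explicit that the support zeros annihilate all cross terms so that the full tensor products genuinely reduce to the subtensor products.

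For the sufficiency direction, I would simply reverse this computation. Given a nonempty $J$ and $w\in\mathbb{R}_{++}^{|J|}$ satisfying \eqref{Keigenvalue} and \eqref{KeigenIn}, define $x$ by extending $w$ with zeros off $J$ as in the statement, so that $x\in\mathbb{R}_+^n\backslash\{0\}$. By the same collapse of the tensor products, the $i$-th coordinate of $\lambda\mathcal{B}x^{m-1}-\mathcal{A}x^{m-1}$ equals zero for $i\in J$ by \eqref{Keigenvalue} and is nonnegative for $i\in N\backslash J$ by \eqref{KeigenIn}, whence $\lambda\mathcal{B}x^{m-1}-\mathcal{A}x^{m-1}\geq 0$. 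Finally $\langle x,\lambda\mathcal{B}x^{m-1}-\mathcal{A}x^{m-1}\rangle=\sum_{i\in J}x_i\cdot 0=0$ because $x_i=0$ for $i\in N\backslash J$, so $(\lambda,x)$ solves \eqref{SECP} and $x$ is the asserted Pareto-eigenvector. Apart from the support-reduction step, everything reduces to routine verification of the three defining conditions of \eqref{SECP}, so I do not expect a genuine obstacle here.
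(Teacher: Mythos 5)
Your proof is correct, and it is exactly the standard support-splitting argument: the paper itself skips the proof entirely, saying only that it ``can be proved by a similar way to that used in \cite{SQ13}'', and the argument in that reference (which goes back to Seeger's matrix case \cite{See99}) is precisely your decomposition into $J=\{i:\bar x_i>0\}$, using complementarity to force the coordinates in $J$ to vanish and the support zeros to collapse the full tensor products onto the principal subtensors. In other words, you have correctly supplied the proof the paper delegates to the literature.
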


\begin{proof}
It can be proved by a similar way to that used in \cite{SQ13} and we skip it here.
\qed\end{proof}

\begin{remark}\label{Remark3}
It is obvious that, in the case where $\mathcal{B}:=\mathcal{I}$, \eqref{Keigenvalue} and \eqref{KeigenIn} turn out to be
\begin{equation}\label{SKeigenvalue}
\mathcal{A}_Jw^{m-1}=\lambda w^{[m-1]}
\end{equation}
and
\begin{equation}\label{SKeigenIn}
\sum_{i_2,\ldots,i_m\in J}a_{ii_2\ldots i_m}w_{i_2}\cdots w_{i_m}\leq 0, ~~~{\rm for~every~}i\in N\backslash J,
\end{equation}
respectively. The corresponding conclusions of Pareto-eigenvalues of $\mathcal{A}$ were studied in \cite{SQ13}.
\end{remark}

By Proposition \ref{ECharact}, if $\lambda$ is Pareto-eigenvalue of $(\mathcal{A},\mathcal{B})$, then there exists a nonempty subset $J\subseteq  N$ such that $\lambda$ is a strict Pareto-eigenvalue of $(\mathcal{A}_J, \mathcal{B}_J)$. Motivated by the works on estimating the cardinality of the Pareto-spectrum of matrices \cite{See99}, we now state and prove the main results in this section.

\begin{theorem}\label{Proposition2} Let $\mathcal{A}$ and $\mathcal{B}$ be two given $m$-th order $n$-dimensional tensors. Assume that $(\mathcal{A},\mathcal{B})$ is not an identical singular pair. Then there are at most $\delta_{m,n}:=nm^{n-1}$ Pareto-eigenvalues of $(\mathcal{A}, \mathcal{B})$.
\end{theorem}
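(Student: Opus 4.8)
The plan is to count the number of distinct Pareto-eigenvalues by decomposing the problem over the nonempty subsets $J \subseteq N$, using Proposition \ref{ECharact} to reduce each to a strict Pareto-eigenvalue problem, and then bounding the number of strict Pareto-eigenvalues associated to each fixed $J$ via the resultant machinery of Proposition \ref{Prop1}. By Proposition \ref{ECharact}, every Pareto-eigenvalue $\lambda$ of $(\mathcal{A},\mathcal{B})$ arises as a strict Pareto-eigenvalue of some principal subpair $(\mathcal{A}_J,\mathcal{B}_J)$, meaning there is a vector $w\in\mathbb{R}_{++}^{|J|}$ solving $\mathcal{A}_Jw^{m-1}=\lambda\mathcal{B}_Jw^{m-1}$. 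Hence the Pareto-spectrum satisfies $\sigma(\mathcal{A},\mathcal{B})\subseteq\bigcup_{\emptyset\neq J\subseteq N}\Lambda_J$, where $\Lambda_J$ denotes the set of $\lambda$ for which $(\mathcal{A}_J-\lambda\mathcal{B}_J)w^{m-1}=0$ has a positive solution.

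First I would fix a nonempty $J$ with $|J|=k$ and count $|\Lambda_J|$. Each such $\lambda$ makes the homogeneous system $(\mathcal{A}_J-\lambda\mathcal{B}_J)w^{m-1}=0$ in $k$ variables have a nonzero (indeed positive, hence complex-nonzero) solution, so by Proposition \ref{Prop1}(1) it forces the resultant ${\rm Res}_J(\lambda):={\rm Res}(P_1^\lambda,\ldots,P_k^\lambda)=0$. By Proposition \ref{Prop1}(2) applied in dimension $k$, the polynomial ${\rm Res}_J(\lambda)$ has degree at most $k(m-1)^{k-1}$ in $\lambda$. Provided this resultant is not identically zero, it has at most $k(m-1)^{k-1}$ real roots, so $|\Lambda_J|\leq k(m-1)^{k-1}$.

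Summing over all subsets, and grouping by cardinality $k=|J|$ (there are $\binom{n}{k}$ subsets of size $k$), gives
\[
|\sigma(\mathcal{A},\mathcal{B})|\leq\sum_{k=1}^n\binom{n}{k}\,k(m-1)^{k-1}.
\]
The remaining step is the combinatorial identity: using $k\binom{n}{k}=n\binom{n-1}{k-1}$ and the binomial theorem,
\[
\sum_{k=1}^n\binom{n}{k}k(m-1)^{k-1}=n\sum_{k=1}^n\binom{n-1}{k-1}(m-1)^{k-1}=n\sum_{j=0}^{n-1}\binom{n-1}{j}(m-1)^{j}=n\,m^{n-1},
\]
which is exactly $\delta_{m,n}$. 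This yields the claimed bound.

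The main obstacle I anticipate is the non-vanishing of the resultant: Proposition \ref{Prop1}(2) only bounds the degree, but if ${\rm Res}_J(\lambda)\equiv 0$ then $\Lambda_J$ could be infinite and the root-counting argument collapses. The identically-zero case corresponds precisely to $(\mathcal{A}_J,\mathcal{B}_J)$ being an identical singular pair in dimension $k$, i.e.\ there is a common nonzero solution of $\mathcal{A}_Jw^{m-1}=0$ and $\mathcal{B}_Jw^{m-1}=0$. I would therefore need to argue that the hypothesis that $(\mathcal{A},\mathcal{B})$ is not an identical singular pair rules this out for every relevant $J$ — or, more carefully, handle those $J$ for which degeneracy occurs separately (for instance by showing such $\lambda$ are already counted under a smaller, nondegenerate index set, or that the positivity requirement $w\in\mathbb{R}_{++}^{|J|}$ together with $K\cap(-K)=\{0\}$ excludes the pathological configurations). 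Pinning down this degeneracy bookkeeping cleanly, so that every $\Lambda_J$ entering the sum genuinely has a nonzero resultant, is the delicate part of the argument; the degree bound and the binomial summation are then routine.
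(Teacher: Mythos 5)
Your proposal is correct and takes essentially the same route as the paper's own proof: both reduce, via Proposition \ref{ECharact}, every Pareto-eigenvalue to a strict Pareto-eigenvalue of a principal sub-pair $(\mathcal{A}_J,\mathcal{B}_J)$, bound the number of these for each $J$ with $|J|=k$ by the resultant degree bound $k(m-1)^{k-1}$ from Proposition \ref{Prop1}, and then compute $\sum_{k=1}^{n}\binom{n}{k}\,k(m-1)^{k-1}=nm^{n-1}$. The degeneracy issue you flag---a sub-pair whose resultant vanishes identically in $\lambda$---is a genuine subtlety, but the paper's proof does not treat it either: it applies Proposition \ref{Prop1} to every principal sub-pair even though the stated hypothesis only excludes the full pair $(\mathcal{A},\mathcal{B})$ from being an identical singular pair, so your explicit acknowledgment of this point makes your argument no weaker than the published one.
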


\begin{proof} It is obvious that, for every $k=0,1,\ldots,n-1$, there are $\binom{n}{n-k}$ corresponding principal
sub-tensors pair of order $m$ dimension $n-k$. Moreover, by Proposition \ref{Prop1}, we know that every principal
sub-tensors pair of order $m$ dimension $n-k$ can have at most $(n-k)(m-1)^{n-k-1}$ strict Pareto-eigenvalues. By Proposition \ref{ECharact}, in this
way one obtains the upper bound
$$
\delta_{m,n}=\sum_{k=0}^{n-1}\binom{n}{n-k}(n-k)(m-1)^{n-k-1}=nm^{n-1}.
$$
Hence proved.
\qed\end{proof}

Now we extend the above result to a more general case where $K$ is a polyhedral convex cone. A closed convex cone $K$ in $\mathbb{R}^n$ is said to be finitely
generated if there is a linear independent collection $\{c_1,c_2,\ldots,c_p\}$ of vectors in $\mathbb{R}^n$ such that
\begin{equation}\label{Kdef}
K={\rm cone}\{c_1,c_2,\ldots,c_p\}=\left\{\sum_{i=1}^p\alpha_jc_j~:~\alpha=(\alpha_1,\alpha_2,\ldots,\alpha_p)^\top\in \mathbb{R}_+^p\right\}.
\end{equation}
 It is clear that $K=\{C^\top\alpha ~|~\alpha\in \mathbb{R}_+^p\}$, where $C=[c_1,c_2,\ldots,c_p]^\top$. Moreover, it is easy to see that the dual cone of $K$, denoted by $K^*$, $K^*=\{w\in \mathbb{R}^n~|~Cw\geq 0\}$.
\begin{theorem}\label{proposition4}
Let $\mathcal{A}$ and $\mathcal{B}$ be two given $m$-th order $n$ dimensional tensors. If the closed convex cone $K$ admits representation \eqref{Kdef}, then $(\mathcal{A},\mathcal{B})$ has at most $\delta_{m,p}:=pm^{p-1}$ $K$-eigenvalues.
\end{theorem}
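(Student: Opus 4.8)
The plan is to reduce the $K$-eigenvalue problem for the polyhedral cone $K = \mathrm{cone}\{c_1,\ldots,c_p\}$ to a Pareto-eigenvalue problem in the coordinates of the generators, so that Theorem~\ref{Proposition2} can be invoked directly with $n$ replaced by $p$. The starting point is the parametrization $x = C^\top\alpha$ with $\alpha\in\mathbb{R}_+^p$ furnished by \eqref{Kdef}, together with the dual-cone description $K^* = \{w : Cw\ge 0\}$. Substituting $x=C^\top\alpha$ into the defining system \eqref{GECP} should turn the three conditions ($x\in K$; $\lambda\mathcal{B}x^{m-1}-\mathcal{A}x^{m-1}\in K^*$; orthogonality) into the analogous complementarity system in the variable $\alpha\in\mathbb{R}_+^p$.

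First I would make the change of variables explicit. The membership $x\in K$ becomes simply $\alpha\ge 0$. For the second condition, applying $C$ to $w:=\lambda\mathcal{B}x^{m-1}-\mathcal{A}x^{m-1}$ and writing $x=C^\top\alpha$ yields a vector $C\big(\lambda\mathcal{B}(C^\top\alpha)^{m-1}-\mathcal{A}(C^\top\alpha)^{m-1}\big)\ge 0$ in $\mathbb{R}^p$. The key observation is that $\alpha\mapsto C\,\mathcal{A}(C^\top\alpha)^{m-1}$ and $\alpha\mapsto C\,\mathcal{B}(C^\top\alpha)^{m-1}$ are each homogeneous of degree $m-1$ in $\alpha$, so they are realized by two $m$-th order $p$-dimensional tensors $\widetilde{\mathcal{A}}$ and $\widetilde{\mathcal{B}}$, namely $\widetilde{\mathcal{A}}\alpha^{m-1} = C\,\mathcal{A}(C^\top\alpha)^{m-1}$ and $\widetilde{\mathcal{B}}\alpha^{m-1} = C\,\mathcal{B}(C^\top\alpha)^{m-1}$. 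Writing out the homogeneous polynomial identity $\widetilde{\mathcal{A}}\alpha^m = \mathcal{A}(C^\top\alpha)^m$ and reading off coefficients gives the entries of $\widetilde{\mathcal{A}}$ explicitly as contractions of $\mathcal{A}$ against the rows of $C$; likewise for $\widetilde{\mathcal{B}}$. Finally, the inner product $\langle x,w\rangle = \langle C^\top\alpha, w\rangle = \langle \alpha, Cw\rangle$ shows the orthogonality condition transforms into $\langle \alpha,\, \lambda\widetilde{\mathcal{B}}\alpha^{m-1}-\widetilde{\mathcal{A}}\alpha^{m-1}\rangle = 0$. Thus $(\lambda,x)$ is a $K$-eigenpair of $(\mathcal{A},\mathcal{B})$ exactly when $(\lambda,\alpha)$ solves the Pareto problem \eqref{SECP} for the pair $(\widetilde{\mathcal{A}},\widetilde{\mathcal{B}})$ in dimension $p$.

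Once this correspondence is set up, the conclusion follows: every $K$-eigenvalue $\lambda$ of $(\mathcal{A},\mathcal{B})$ is a Pareto-eigenvalue of the $p$-dimensional pair $(\widetilde{\mathcal{A}},\widetilde{\mathcal{B}})$, so by Theorem~\ref{Proposition2} applied in dimension $p$ there are at most $\delta_{m,p}=pm^{p-1}$ of them. Because the generators $\{c_1,\ldots,c_p\}$ are linearly independent, the map $\alpha\mapsto C^\top\alpha$ is injective, so distinct eigenvectors on the $\alpha$ side give distinct eigenvectors on the $x$ side and no $K$-eigenvalue is lost in the reduction; this guarantees the count on the reduced problem genuinely bounds the count on the original.

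The main obstacle I expect is verifying that the reduced pair $(\widetilde{\mathcal{A}},\widetilde{\mathcal{B}})$ legitimately feeds into Theorem~\ref{Proposition2}, which requires that $(\widetilde{\mathcal{A}},\widetilde{\mathcal{B}})$ is not an identical singular pair in $\mathbb{R}^p$. One must check that an $\alpha\neq 0$ with $\widetilde{\mathcal{A}}\alpha^{m-1}=\widetilde{\mathcal{B}}\alpha^{m-1}=0$ cannot occur, which is where the linear independence of the $c_j$ (forcing $C^\top\alpha\neq 0$) and the standing hypotheses on $(\mathcal{A},\mathcal{B})$ and on $\mathcal{B}$ being $K$-positive must be brought to bear; the subtlety is that $C\mathcal{A}(C^\top\alpha)^{m-1}=0$ does not immediately give $\mathcal{A}(C^\top\alpha)^{m-1}=0$ unless one accounts for the range of $C$. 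A clean way to finesse this is to note that on the relevant simplex basis the positivity assumption $\mathcal{B}x^m\neq 0$ for $x\in K\setminus\{0\}$ transfers to $\widetilde{\mathcal{B}}\alpha^m = \mathcal{B}(C^\top\alpha)^m\neq 0$ for $\alpha\ge 0$, $\alpha\neq 0$, which both rules out the degenerate pair and matches the hypotheses under which the Pareto count was established.
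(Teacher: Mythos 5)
Your proposal follows essentially the same route as the paper's proof: parametrize $K$-eigenpairs by $x = C^\top\alpha$, push the complementarity system through $C$ to obtain a Pareto eigenvalue problem in dimension $p$ for a transformed tensor pair (your $\widetilde{\mathcal{A}},\widetilde{\mathcal{B}}$ are exactly the paper's $\mathcal{G},\mathcal{D}$, defined by the same contractions against the rows of $C$), and then invoke Theorem~\ref{Proposition2}. Your closing concern about verifying the non-identical-singular-pair hypothesis is in fact more care than the paper itself takes---its proof applies Theorem~\ref{Proposition2} to the reduced system without comment---so your argument is, if anything, slightly more complete on that point.
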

\begin{proof} We first prove that problem \eqref{GECP} with $K$ defined by \eqref{Kdef} is equivalent to finding a vector $\bar \alpha\in \mathbb{R}^p\backslash\{0\}$ and $\bar \lambda\in \mathbb{R}$ with property
\begin{equation}\label{ECPEq}
\bar \alpha \geq 0,~~~\bar \lambda \mathcal{D} \bar \alpha ^{m-1}-\mathcal{G}\bar \alpha^{m-1}\geq 0, ~~~\langle \bar \alpha,\bar \lambda \mathcal{D} \bar \alpha ^{m-1}-\mathcal{G}\bar \alpha^{m-1}\rangle=0,
\end{equation}
where $\mathcal{D}$ and $\mathcal{G}$ are two $m$-th order $p$-dimensional tensors, whose elements are denoted by
$$
d_{i_1i_2\ldots i_m}=\sum_{j_1,j_2,\ldots,j_m=1}^nb_{j_1j_2\ldots j_m}c_{i_1j_1}c_{i_2j_2}\cdots c_{i_mj_m}$$
and
$$g_{i_1i_2\ldots i_m}=\sum_{j_1,j_2,\ldots,j_m=1}^na_{j_1j_2\ldots j_m}c_{i_1j_1}c_{i_2j_2}\cdots c_{i_mj_m},$$
for $1\leq i_1,i_2,\ldots, i_m\leq p$, respectively.

Let $(\bar x,\bar \lambda)\in (\mathbb{R}^n\backslash\{0\})\times \mathbb{R}$ be a solution of (\ref{GECP}) with $K$ defined by (\ref{Kdef}).  Since $\bar x\in K$, by the definition of $K$, there exists a nonzero vector $\bar \alpha\in \mathbb{R}^p_+$ such that $\bar x=C^\top \bar \alpha$. Consequently, from $\bar \lambda \mathcal{B}\bar x^{m-1}-\mathcal{A}\bar x^{m-1}\in K^*$ and the expression of $K^*$, it holds that $C(\bar \lambda \mathcal{B}\bar x^{m-1}-\mathcal{A}\bar x^{m-1})\geq 0$, which implies
\begin{equation}\label{EqC}
C(\bar \lambda \mathcal{B}(C^\top \bar \alpha)^{m-1}-\mathcal{A}(C^\top \bar \alpha)^{m-1})\geq 0.
\end{equation}
By the definitions of $\mathcal{D}$ and $\mathcal{G}$, we know that (\ref {EqC}) can be equivalently written as
$$
\bar \lambda \mathcal{D} \bar \alpha ^{m-1}-\mathcal{G}\bar \alpha^{m-1}\geq 0.
$$
Moreover, it is easy to verify that $\langle \bar \alpha,\bar \lambda \mathcal{D} \bar \alpha ^{m-1}-\mathcal{G}\bar \alpha^{m-1}\rangle=0$. Conversely, if $(\bar \alpha,\bar \lambda)\in (\mathbb{R}^p\backslash\{0\})\times \mathbb{R}$ satisfies (\ref{ECPEq}), then we can prove that $(\bar x,\bar \lambda)$ with $\bar x=C^\top\bar \alpha$ satisfies (\ref{GECP}) by a similar way.

Consequently, by applying Theorem \ref{Proposition2} to the problem (\ref{ECPEq}), we know that $(\mathcal{A},\mathcal{B})$ has at most $\delta_{m,p}=pm^{p-1}$ $K$-eigenvalues. The proof is completed.
\qed\end{proof}

The above theorem shows that $\sigma_K(\mathcal{A},\mathcal{B})$ has finitely many elements in case where $K$ is a polyhedral convex cone. However, in the nonpolyhedral case the situation can be even worse. For instance, Iusem and Seeger \cite{IS07} successfully constructed a symmetric matrix $A$ (i.e., $2$-th order $n$ dimensional tensor) and a nonpolyhedral convex cone $K$ such that $\sigma_K(A, I_n)$ behaves like the Cantor ternary set, i.e., it is uncountable and totally disconnected.

In the rest of this section, we discuss the case where $\mathcal{B}:=\mathcal{I}$. We first present the following lemmas.

\begin{lemma}\label{ECPLemma1}
Let $\mathcal{A}$ be an $m$-th order $n$-dimensional nonnegative tensor, i.e., $a_{i_1\ldots i_m}\geq 0$ for $1\leq i_1,\ldots,i_m\leq n$. If $\mathcal{A}$ has two eigenvectors in $\mathbb{R}_{++}^n$, then, the corresponding eigenvalues are equal.
\end{lemma}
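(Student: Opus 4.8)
The plan is to exploit the Perron--Frobenius-type monotonicity that nonnegativity of $\mathcal{A}$ induces on strictly positive vectors, by means of a maximum-ratio comparison between the two positive eigenvectors. Suppose $u,v\in\mathbb{R}_{++}^n$ are eigenvectors of $\mathcal{A}$ with eigenvalues $\lambda$ and $\mu$, so that (since $\mathcal{B}:=\mathcal{I}$) they satisfy $\mathcal{A}u^{m-1}=\lambda u^{[m-1]}$ and $\mathcal{A}v^{m-1}=\mu v^{[m-1]}$. First I would note that because every entry $a_{i_1\cdots i_m}\ge 0$ while $u,v>0$, each component of $\mathcal{A}u^{m-1}$ and $\mathcal{A}v^{m-1}$ is nonnegative, which already forces $\lambda\ge 0$ and $\mu\ge 0$; this sign information clarifies the setting though it is not strictly needed.

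The core step is to set $s:=\max_{i\in N} u_i/v_i$ and to pick an index $k$ attaining this maximum, so that $u_i\le s\,v_i$ for every $i\in N$ and $u_k=s\,v_k$. Evaluating the $k$-th eigen-equation and using that the homogeneous form $(\mathcal{A}(\cdot)^{m-1})_k$ is nondecreasing in its nonnegative arguments gives
$$\lambda u_k^{m-1}=(\mathcal{A}u^{m-1})_k=\sum_{i_2,\ldots,i_m} a_{ki_2\cdots i_m}u_{i_2}\cdots u_{i_m}\le s^{m-1}\sum_{i_2,\ldots,i_m} a_{ki_2\cdots i_m}v_{i_2}\cdots v_{i_m}=s^{m-1}\mu v_k^{m-1}.$$
Substituting $u_k^{m-1}=s^{m-1}v_k^{m-1}$ on the left-hand side and cancelling the strictly positive factor $s^{m-1}v_k^{m-1}$ then yields $\lambda\le\mu$.

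Finally I would invoke symmetry: repeating the same argument with the roles of $u$ and $v$ interchanged (equivalently, comparing at the index where $v_i/u_i$ is largest) produces $\mu\le\lambda$, whence $\lambda=\mu$, which is the desired conclusion.

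I expect the only delicate point to be the direction of the inequality in the comparison step, namely ensuring that the maximum-ratio index $k$ is chosen so that replacing $u$ by $s\,v$ \emph{overestimates} $(\mathcal{A}u^{m-1})_k$, and that the common factor $u_k^{m-1}=s^{m-1}v_k^{m-1}$ cancels cleanly. The strict positivity of $v_k$ and of $s$, guaranteed by $u,v\in\mathbb{R}_{++}^n$, is precisely what makes this cancellation legitimate; no appeal to irreducibility of $\mathcal{A}$ or to the full Perron--Frobenius machinery for tensors is required.
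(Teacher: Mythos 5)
Your proof is correct and is essentially the paper's own argument: the paper's minimal scaling $t_0=\min\{t>0:\ ty-x\in\mathbb{R}_+^n\}$ is exactly your maximum ratio $s=\max_i u_i/v_i$, and both proofs then exploit monotonicity of the nonnegative form $\mathcal{A}(\cdot)^{m-1}$ on positive vectors to compare the two eigenvalues. Your way of extracting the inequality---evaluating the eigen-equation at the index attaining the maximum and cancelling $u_k^{m-1}>0$---is a slight streamlining that avoids the paper's case split between $\lambda_1=0$ and $\lambda_1>0$ and its componentwise $(m-1)$-th roots, but the underlying comparison is the same.
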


\begin{proof}
Let $\lambda_1$ and $\lambda_2$ be two Pareto-eigenvalues of $\mathcal{A}$, and $x, y\in \mathbb{R}_{++}^n$ the corresponding associated Pareto-eigenvectors, which means
$$
\mathcal{A}x^{m-1}=\lambda_1 x^{[m-1]}~~~{\rm and}~~~
\mathcal{A}y^{m-1}=\lambda_2 y^{[m-1]}.
$$
Since $\mathcal{A}$ is nonnegative tensor, we know that $\lambda_1,\lambda_2$ are nonnegative.  Without loss of generality, assume $\lambda_1\geq\lambda_2$. If $\lambda_1=0$, then $\lambda_2=0$. Now we assume $\lambda_1>0$. Denote \begin{equation}\label{t0def}
t_0={\rm min}\{t>0~:~ty-x\in \mathbb{R}_+^n\},
\end{equation}
which must exist since $y\in \mathbb{R}_{++}^n$. It is obvious that $t_0y-x\in \mathbb{R}_+^n$, which implies that $t_0y_i\geq x_i$ for all $i$. Consequently, since  $a_{i_1\ldots i_m}\geq 0$ for $1\leq i_1,\ldots,i_m\leq n$, by the definitions of $\mathcal{A}x^{m-1}$ and $\mathcal{A}(t_0y)^{m-1}$, one knows that
$$
t_0^{m-1}\lambda_2y^{[m-1]}-\lambda_1x^{[m-1]}=\mathcal{A}(t_0y)^{m-1}-\mathcal{A}x^{m-1}\in \mathbb{R}^n_+,
$$
which implies $$
t_0(\lambda_2/\lambda_1)^{\frac{1}{m-1}}y-x\in \mathbb{R}^n_+.
$$
By (\ref{t0def}), we know that $t_0\leq t_0(\lambda_2/\lambda_1)^{\frac{1}{m-1}}$, which implies $\lambda_1\leq \lambda_2$. Therefore, we obtain $\lambda_1=\lambda_2$ and complete the proof.
\qed\end{proof}

Let $\mathcal{A}$ be an $m$-th order $n$-dimensional tensor, we say that $\mathcal{A}$ is a $Z$-tensor, if all off-diagonal entries of $\mathcal{A}$ are nonpositive.

\begin{lemma}\label{ECPLemma2}
Let $\mathcal{A}$ be an $m$-th order $n$-dimensional tensor satisfying any of the following conditions:
(i) $-\mathcal{A}$ is a $Z$-tensor; (ii) $\mathcal{A}$ is a $Z$-tensor.
Then, $\mathcal{A}$ admits at most one strict eigenvalue.
\end{lemma}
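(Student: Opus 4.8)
The plan is to reduce both cases to Lemma \ref{ECPLemma1} by shifting the tensor along the unit tensor $\mathcal{I}$, producing a nonnegative tensor while preserving strictly positive eigenvectors.

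First I would record what a strict eigenvalue means in this setting. By definition a strict Pareto-eigenvalue $\lambda$ of $\mathcal{A}$ (with $\mathcal{B}=\mathcal{I}$) possesses an eigenvector $x\in \mathbb{R}_{++}^n$. Since $x>0$, the residual $\lambda x^{[m-1]}-\mathcal{A}x^{m-1}$ is nonnegative, and $\langle x,\lambda x^{[m-1]}-\mathcal{A}x^{m-1}\rangle=0$ forces this residual to vanish componentwise; hence $\mathcal{A}x^{m-1}=\lambda x^{[m-1]}$. Thus a strict eigenvalue is exactly an eigenvalue admitting a positive eigenvector, and it suffices to prove that any two positive eigenvectors of $\mathcal{A}$ yield the same eigenvalue.

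For case (i), the hypothesis that $-\mathcal{A}$ is a $Z$-tensor means every off-diagonal entry of $\mathcal{A}$ is nonnegative. I would set $\hat{\mathcal{A}}:=\mathcal{A}+c\mathcal{I}$ with $c$ large enough that $a_{i\cdots i}+c\geq 0$ for every $i$; because adding $c\mathcal{I}$ alters only the diagonal entries $a_{i\cdots i}$, the tensor $\hat{\mathcal{A}}$ is nonnegative. Using $\mathcal{I}x^{m-1}=x^{[m-1]}$, any positive eigenpair $(\lambda,x)$ of $\mathcal{A}$ satisfies $\hat{\mathcal{A}}x^{m-1}=(\lambda+c)x^{[m-1]}$, so $x$ is a positive eigenvector of the nonnegative tensor $\hat{\mathcal{A}}$ with eigenvalue $\lambda+c$. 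If $\mathcal{A}$ had two strict eigenvalues $\lambda_1,\lambda_2$ with positive eigenvectors, Lemma \ref{ECPLemma1} would give $\lambda_1+c=\lambda_2+c$, i.e. $\lambda_1=\lambda_2$.

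Case (ii) follows from the same device applied to $-\mathcal{A}$: if $\mathcal{A}$ is a $Z$-tensor, then $-\mathcal{A}$ has nonnegative off-diagonal entries, so $\widetilde{\mathcal{A}}:=-\mathcal{A}+c\mathcal{I}$ is nonnegative for large $c$, and a positive eigenpair $(\lambda,x)$ of $\mathcal{A}$ becomes a positive eigenpair $(c-\lambda,x)$ of $\widetilde{\mathcal{A}}$, whence Lemma \ref{ECPLemma1} again forces equality of any two such $\lambda$. The only point requiring care is verifying that the shift by $c\mathcal{I}$ touches exactly the diagonal entries and leaves the eigenvector relation intact through $\mathcal{I}x^{m-1}=x^{[m-1]}$; once that bookkeeping is in place, the reduction to the nonnegative case of Lemma \ref{ECPLemma1} is immediate, so I do not anticipate any serious obstacle.
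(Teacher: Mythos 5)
Your proof is correct and follows essentially the same route as the paper: shift $\mathcal{A}$ (or $-\mathcal{A}$) by a large multiple of the unit tensor $\mathcal{I}$ to obtain a nonnegative tensor, observe that positive eigenvectors are preserved with eigenvalues shifted, and invoke Lemma \ref{ECPLemma1}. The only differences are cosmetic: you additionally verify that a strict eigenvalue is exactly one whose complementarity residual vanishes for a positive eigenvector, and you write out case (ii) explicitly where the paper says ``similar way.''
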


\begin{proof}
We first consider case (i). Let $\lambda_1, \lambda_2\in \mathbb{R}$ be two strict
eigenvalues of $\mathcal{A}$, i.e., there are vectors $x,y\in \mathbb{R}_{++}^n$ such that $
\mathcal{A}x^{m-1}=\lambda_1x^{[m-1]}$ and $\mathcal{A}y^{m-1}=\lambda_2y^{[m-1]}$. Hence,
$$
(\mathcal{A}+\mu\mathcal{I})x^{m-1}=(\lambda_1+\mu)x^{[m-1]}~~~~~{\rm and}~~~~~(\mathcal{A}+\mu \mathcal{I})y^{m-1}=(\lambda_2+\mu)y^{[m-1]},
$$
where $\mu$ is any real number. Since $-\mathcal{A}$ is a $Z$-tensor, $\mathcal{A}+\mu\mathcal{I}$ is nonnegative for $\mu$ sufficiently large.  By Lemma \ref{ECPLemma1}, we obtain the equality $\lambda_1+\mu=\lambda_2+\mu$, which implies the desired conclusion.

In case (ii), the conclusion can be proved in a similar way.
\qed\end{proof}

\begin{proposition}\label{Proposition3}
Let $\mathcal{A}$ be an $m$-th order $n$-dimensional tensor satisfying any of the following conditions:
(i) $-\mathcal{A}$ is a $Z$-tensor; (ii) $\mathcal{A}$ is a $Z$-tensor.
Then, $\mathcal{A}$ can have at most $\rho_{n}:=2^{n}-1$ Pareto eigenvalues.
\end{proposition}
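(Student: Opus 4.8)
The plan is to combine the complementarity-spectrum characterization of Proposition \ref{ECharact} with the single-eigenvalue bound of Lemma \ref{ECPLemma2}, counting the contribution of each principal sub-tensor. First I would invoke Proposition \ref{ECharact} in the specialization $\mathcal{B}=\mathcal{I}$ recorded in Remark \ref{Remark3}: every Pareto-eigenvalue $\lambda$ of $\mathcal{A}$ is a \emph{strict} Pareto-eigenvalue of the principal sub-tensor $\mathcal{A}_J$ for some nonempty $J\subseteq N$, in the sense that there is a vector $w\in\mathbb{R}_{++}^{|J|}$ with $\mathcal{A}_Jw^{m-1}=\lambda w^{[m-1]}$. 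Thus the Pareto-spectrum of $\mathcal{A}$ is contained in the union, taken over all nonempty $J\subseteq N$, of the sets of strict eigenvalues of the $\mathcal{A}_J$.

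The key structural observation I would establish next is that the $Z$-tensor hypothesis is hereditary under passage to principal sub-tensors. Indeed, the off-diagonal entries of $\mathcal{A}_J$, namely those $a_{i_1\cdots i_m}$ with $i_1,\ldots,i_m\in J$ not all equal, form a subset of the off-diagonal entries of $\mathcal{A}$; hence if $\mathcal{A}$ is a $Z$-tensor then so is each $\mathcal{A}_J$, and symmetrically if $-\mathcal{A}$ is a $Z$-tensor then so is each $-\mathcal{A}_J$. Consequently Lemma \ref{ECPLemma2} applies verbatim to every principal sub-tensor, so each $\mathcal{A}_J$ possesses at most one strict eigenvalue.

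Finally I would carry out the counting. There are exactly $2^n-1$ nonempty subsets $J$ of $N=\{1,\ldots,n\}$, and by the previous step each contributes at most one strict eigenvalue; since every Pareto-eigenvalue of $\mathcal{A}$ arises from at least one such $J$, the cardinality of the Pareto-spectrum is bounded by $2^n-1=\rho_n$, as claimed.

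I expect the only genuinely delicate point to be the hereditary property of the $Z$-tensor condition; once that is in place, Lemma \ref{ECPLemma2} collapses the per-subset count from the generic bound $(n-k)(m-1)^{n-k-1}$ used in Theorem \ref{Proposition2} down to a single eigenvalue, so the estimate reduces to a bare count of nonempty index sets rather than the weighted sum $\sum_{k=0}^{n-1}\binom{n}{n-k}(n-k)(m-1)^{n-k-1}$.
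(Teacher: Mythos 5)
Your proposal is correct and follows essentially the same route as the paper's own proof: reduce via Proposition \ref{ECharact} (with $\mathcal{B}=\mathcal{I}$) to strict eigenvalues of principal sub-tensors, observe that the $Z$-tensor property (of $\mathcal{A}$ or of $-\mathcal{A}$) is inherited by every principal sub-tensor, invoke Lemma \ref{ECPLemma2} to get at most one strict eigenvalue per sub-tensor, and count the $2^n-1$ nonempty index sets. The only cosmetic difference is that you count the nonempty subsets directly while the paper writes the same count as $\sum_{k=0}^{n-1}\binom{n}{n-k}\cdot 1$, and you treat cases (i) and (ii) uniformly whereas the paper proves (i) and declares (ii) analogous.
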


\begin{proof}
We only consider case (i). The conclusion for case (ii) can be proved in a similar way. For every $k=0,1,\ldots,n-1$, there are $\binom{n}{n-k}$ principal
sub-tensors of order $m$ dimension $n-k$. Since $-\mathcal{A}$ is a $Z$-tensor, it is clear that any principal sub-tensors of $-\mathcal{A}$ are also $Z$-tensor. Consequently, by Lemma \ref{ECPLemma2}, we know that, every principal sub-tensors can have at most one strict eigenvalues. Therefore, by Proposition \ref{ECharact},  one gets the upper bound
$$
\rho_{n}=\sum_{k=0}^{n-1}\binom{n}{n-k}\cdot1=2^{n}-1.
$$
We obtain the desired result and complete the proof.
\qed\end{proof}

It is easy to see that, if $\mathcal{A}$ is a nonnegative tensor, then $-\mathcal{A}$ is a $Z$-tensor. Hence, by Proposition \ref{Proposition3}, we know that any $m$-th order $n$ dimensional nonnegative tensor can have at most $(2^{n}-1)$ Pareto eigenvalues. The following example shows that the bound $\rho_n$ is sharp within the second class mentioned in Proposition \ref{Proposition3}. This is what we call the exponential growth phenomenon.

\begin{example}
Consider a $3$-th order $n$-dimensional tensor $\mathcal{A}=(a_{i_1i_2i_3})_{1\leq i_1,i_2,i_3\leq n}$ with $a_{i_1i_2i_3}=-a^{i_1+i_2+i_3}$ and $a>\sqrt[3]{4}$. Given an arbitrary index set $J=\{l_1,l_2,\ldots,l_r\}$ with $1\leq l_1< l_2< \cdots < l_r\leq n$, the principal sub-tensor
$
\mathcal{A}_J=(c_{j_1j_2j_3})_{1\leq j_1,j_2,j_3\leq r}
$
has $c_{j_1j_2j_3}=-a^{l_{j_1}+l_{j_2}+l_{j_3}}$. Take vector $\xi=(a^{\frac{l_1}{2}},a^{\frac{l_2}{2}},\ldots,a^{\frac{l_r}{2}})^\top$. It is obvious that $\xi\in \mathbb{R}^r_{++}$ and
\begin{align*}
(\mathcal{A}_J\xi^2)_j=\sum_{j_2,j_3=1}^rc_{jj_2j_3}\xi_{j_2}\xi_{j_3} &=-\sum_{j_2,j_3=1}^ra^{l_j+l_{j_2}+l_{j_3}}a^{\frac{l_{j_2}}{2}}a^{\frac{l_{j_3}}{2}} \\
&=-\left(\sum_{j\in J}a^{\frac{3}{2}j}\right)^2a^{l_j}=\lambda_J\xi^2_{j},
\end{align*}
where $\lambda_J=-\left(\sum_{j\in J}a^{\frac{3}{2}j}\right)^2$. This means that (\ref{SKeigenvalue}) holds. Since $a_{i_1i_2i_3}<0$ and $\xi>0$, one does not have to worry about the condition (\ref{SKeigenIn}). By Remark \ref{Remark3}, we know that $\lambda_J$ is a Pareto-eigenvalue of $\mathcal{A}$. Now we need to check that $\lambda_{J_1}\neq \lambda_{J_2}$ whenever $J_1\neq J_2$. Take $J_1,J_2\subseteq \{1,2,\ldots,n\}$ with $J_1\neq J_2$. Since $J_1\triangle J_2=(J_1\backslash J_2)\cup (J_2\backslash J_1)\neq\emptyset$, one can define $k={\rm max}\{k\in \{1,2,\ldots,n\},k\in J_1\triangle J_2\}$. Without loss of generality, we assume that $k\in J_2$, which implies $k\not\in J_1$. In this case, we have
$$
\sqrt{\lambda_{J_1}}-\sqrt{\lambda_{J_2}}=\sum_{j\in J_2}a^{\frac{3}{2}j}-\sum_{j\in J_1}a^{\frac{3}{2}j}=\sum_{j\in J_2,j\leq k}b^j-\sum_{j\in J_1,j\leq k-1}b^j.
$$
where $b=a^{\frac{3}{2}}$. This implies that
$$
\sqrt{\lambda_{J_1}}-\sqrt{\lambda_{J_2}}=\sum_{j\in J_2,j\leq k}b^j-\sum_{j\in J_1,j\leq k-1}b^j\geq b^k-\displaystyle\sum_{j=1}^{k-1}b^j=\frac{b^{k+1}-2b^k+b}{b-1}\geq \frac{b}{b-1}>0,
$$
where the last inequality comes the fact $b>2$ from the given condition that $a>\sqrt[3]{4}$. Therefore, we know that $\lambda_{J_1}\neq\lambda_{J_2}$. Since there are $2^n-1$ ways of choosing the index
set $J$, there are as many elements in the Pareto spectrum of this special tensor $\mathcal{A}$.
\end{example}

\begin{proposition}\label{proposition5}
Suppose that there exists an index subset $J_0\subseteq N$ with $|J_0|=l$ such that $a_{ii_2\ldots i_m}>0$ for any $i\in J_0$ and $i_2,\ldots, i_m\in N\backslash \{i\}$. Then $\mathcal{A}$ has at most $\gamma_{m,n}^l:=[n(m-1)+l](m-1)^{l-1}m^{n-l-1}$ Pareto-eigenvalues. In particular, if $J_0=N$, then $\mathcal{A}$ has at most $\mu_{m,n}:=n(m-1)^{n-1}$ Pareto-eigenvalues.
\end{proposition}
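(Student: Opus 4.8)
The plan is to count the Pareto-eigenvalues by summing, over all admissible supports $J$, the maximal number of strict eigenvalues that each principal sub-tensor $\mathcal{A}_J$ can contribute, exactly as in the proof of Theorem \ref{Proposition2}; the new ingredient is to use the positivity hypothesis on $J_0$ to prune the admissible supports drastically. First I would invoke Proposition \ref{ECharact} together with Remark \ref{Remark3}: since $\mathcal{B}=\mathcal{I}$, a real number $\lambda$ is a Pareto-eigenvalue of $\mathcal{A}$ if and only if there exist a nonempty $J\subseteq N$ and $w\in\mathbb{R}_{++}^{|J|}$ solving \eqref{SKeigenvalue} for which the inequalities \eqref{SKeigenIn} hold at every $i\in N\setminus J$. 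Hence every Pareto-eigenvalue is a strict eigenvalue of some $\mathcal{A}_J$, and the total number is bounded by the sum over admissible $J$ of the number of strict eigenvalues of $\mathcal{A}_J$.

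The key step, which carries the whole argument, is to show that the positivity assumption forces $J_0\subseteq J$ for every admissible support $J$. Suppose not; then some $i\in J_0$ lies in $N\setminus J$. Because $i\notin J$, every tuple $(i_2,\ldots,i_m)$ with all entries in $J$ automatically has $i_2,\ldots,i_m\in N\setminus\{i\}$, so $a_{ii_2\cdots i_m}>0$ by hypothesis; since $w\in\mathbb{R}_{++}^{|J|}$, the left-hand side of \eqref{SKeigenIn} for this $i$ is a sum of strictly positive terms and is therefore strictly positive, contradicting \eqref{SKeigenIn}. Thus the admissible supports are precisely the sets $J$ with $J_0\subseteq J\subseteq N$. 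This pruning is the essential obstacle; everything else is bookkeeping.

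It then remains to count. By Proposition \ref{Prop1}(2) applied to the pair $(\mathcal{A}_J,\mathcal{I})$, which is never an identical singular pair because $\mathcal{I}x^{m-1}=x^{[m-1]}$ vanishes only at $x=0$, each $\mathcal{A}_J$ contributes at most $|J|(m-1)^{|J|-1}$ strict eigenvalues. Writing $J=J_0\cup J'$ with $J'\subseteq N\setminus J_0$ and $s=|J'|$, so that $|J|=l+s$ and there are $\binom{n-l}{s}$ such $J$, the bound becomes
$$
\sum_{s=0}^{n-l}\binom{n-l}{s}(l+s)(m-1)^{l+s-1}.
$$
To evaluate this I would set $q=m-1$ and $p=n-l$, pull out $q^{l-1}$, and split the summation according to the two terms of the factor $l+s$, using $\sum_{s=0}^{p}\binom{p}{s}q^s=(1+q)^p=m^{p}$ and $\sum_{s=0}^{p}s\binom{p}{s}q^s=pq(1+q)^{p-1}=pq\,m^{p-1}$. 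This gives $q^{l-1}\big(l\,m^{p}+pq\,m^{p-1}\big)=q^{l-1}m^{p-1}\big(lm+pq\big)$, and since $lm+pq=l(q+1)+(n-l)q=n(m-1)+l$, the sum equals $[n(m-1)+l](m-1)^{l-1}m^{n-l-1}=\gamma_{m,n}^l$. Finally, the case $J_0=N$ (i.e.\ $l=n$) is simply the single term $s=0$, which yields $n(m-1)^{n-1}=\mu_{m,n}$ directly.
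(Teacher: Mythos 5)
Your proof is correct and follows essentially the same route as the paper's: you prune the admissible supports to those containing $J_0$ via the complementarity inequalities, bound the strict eigenvalues of each $\mathcal{A}_J$ by $|J|(m-1)^{|J|-1}$ using the resultant degree bound, and evaluate the resulting binomial sum to get $\gamma_{m,n}^l$. In fact you supply two details the paper leaves implicit --- the contradiction argument showing why $J_0\subseteq J$ is forced, and the observation that $(\mathcal{A}_J,\mathcal{I})$ is never an identical singular pair so Proposition \ref{Prop1} applies --- so nothing further is needed.
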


\begin{proof}
Under the given condition, we only need to consider the principal sub-tensor $\mathcal{A}_J$ with $J_0\subseteq J$, which is due to the condition (\ref{KeigenIn}). Among the principal sub-tensors of order $m$ dimension $k$, there are $\binom{n-l}{k-l}$ of them with that property. This leads to the upper bound
$$
\begin{array}{lll}
\gamma_{m,n}^l&=&\displaystyle\sum_{k=l}^n\binom{n-l}{k-l}k(m-1)^{k-1}\\
&=&\displaystyle(m-1)^l\sum_{s=0}^{n-l}\binom{n-l}{s}(s+l)(m-1)^{s-1}\\
&=&[n(m-1)+l](m-1)^{l-1}m^{n-l-1}.
\end{array}
$$
In particular, if $J_0=N$, we obtain immediately the desired result. The proof is completed.
\qed\end{proof}

A similar type of argument leads to the following result:
\begin{proposition}\label{proposition6} Suppose that there exists an index set $J_0\subseteq N$  with $|J_0|=l$ such that $a_{ii_2\ldots i_m}>0$ for any $i\in J_0$ and $i_2,\ldots, i_m\in N\backslash \{i\}$. Moreover, suppose that $-\mathcal{A}$ is a $Z$-tensor. Then,
$\mathcal{A}$ has at most $\alpha^l_{n}:=2^{n-l}$ Pareto-eigenvalues.
\end{proposition}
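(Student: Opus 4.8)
The plan is to fuse two ingredients already established in the excerpt: the index-set restriction exploited in the proof of Proposition \ref{proposition5}, and the ``at most one strict eigenvalue'' property of $Z$-type tensors furnished by Lemma \ref{ECPLemma2}. By the characterization of Proposition \ref{ECharact}, specialized to $\mathcal{B}=\mathcal{I}$ through Remark \ref{Remark3}, every Pareto-eigenvalue $\lambda$ of $\mathcal{A}$ arises from some nonempty $J\subseteq N$ for which $\lambda$ is a strict eigenvalue of the principal sub-tensor $\mathcal{A}_J$, subject to the sign condition \eqref{SKeigenIn}. I would therefore bound the number of Pareto-eigenvalues by summing, over all admissible $J$, the number of strict eigenvalues each $\mathcal{A}_J$ can supply.

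First I would argue that only index sets with $J_0\subseteq J$ are admissible. Indeed, if $J_0\not\subseteq J$, then one may pick $i\in J_0\setminus J\subseteq N\setminus J$; in the left-hand side of \eqref{SKeigenIn} for this $i$, every summand has indices $i_2,\dots,i_m\in J\subseteq N\setminus\{i\}$, so $a_{ii_2\cdots i_m}>0$ by hypothesis, while $w\in\mathbb{R}_{++}^{|J|}$ forces $w_{i_2}\cdots w_{i_m}>0$. Hence the whole sum is strictly positive, contradicting the requirement in \eqref{SKeigenIn}. This is precisely the reduction used in Proposition \ref{proposition5}, which I would simply invoke. (For $l\geq 1$ the sets $J\supseteq J_0$ are automatically nonempty, so the specialization of Proposition \ref{ECharact} applies.)

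Next, for each admissible $J$ with $J_0\subseteq J\subseteq N$, I would note that $-\mathcal{A}_J$ inherits the $Z$-tensor property from $-\mathcal{A}$, since any principal sub-tensor of a $Z$-tensor is again a $Z$-tensor. Lemma \ref{ECPLemma2} then yields that $\mathcal{A}_J$ has at most one strict eigenvalue, so each admissible $J$ contributes at most one Pareto-eigenvalue to $\mathcal{A}$ via Proposition \ref{ECharact}. Finally I would count the admissible index sets: the $J$ with $J_0\subseteq J\subseteq N$ are in bijection with the subsets of $N\setminus J_0$, of which there are $2^{n-l}$; equivalently, grouping by $|J|=k$ gives $\sum_{k=l}^{n}\binom{n-l}{k-l}=\sum_{s=0}^{n-l}\binom{n-l}{s}=2^{n-l}$. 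Multiplying by the per-set bound of $1$ from the previous step produces at most $2^{n-l}=\alpha^l_n$ Pareto-eigenvalues, as claimed.

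I do not anticipate a genuine obstacle: the argument is a clean superposition of the counting scheme of Proposition \ref{proposition5} with the sharper single-eigenvalue estimate of Lemma \ref{ECPLemma2} replacing the factor $k(m-1)^{k-1}$. The one point warranting care is the reduction step, namely the verification that the strict positivity of the entries $a_{ii_2\cdots i_m}$ for $i\in J_0$ genuinely rules out every $J$ with $J_0\not\subseteq J$ through \eqref{SKeigenIn}; double counting is not a concern, since we are producing an upper bound by summing the maximal contribution of each admissible $J$.
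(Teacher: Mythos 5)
Your proof is correct and follows essentially the same route as the paper: restrict to principal sub-tensors $\mathcal{A}_J$ with $J_0\subseteq J$ (forced by the sign condition \eqref{SKeigenIn}, exactly as in Proposition \ref{proposition5}), apply Lemma \ref{ECPLemma2} to get at most one strict eigenvalue per admissible $J$, and count the $2^{n-l}$ supersets of $J_0$. You actually spell out the details the paper leaves implicit, and your binomial $\binom{n-l}{k-l}$ is the correct count (the paper's printed $\binom{n-1}{k-1}$ is a typo, as only $\sum_{k=l}^{n}\binom{n-l}{k-l}$ equals $2^{n-l}$).
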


\begin{proof}
This time one has to compute
$$
\alpha^l_{n}=\sum_{k=l}^n\binom{n-1}{k-1}\cdot 1=\sum_{s=0}^{n-l}\binom{n-1}{s}\cdot 1=2^{n-l}.
$$
We obtain the desired result and complete the proof.
\qed\end{proof}

Theorems \ref{Proposition2}--\ref{proposition4} and Propositions \ref{Proposition3}--\ref{proposition6} extend the corresponding results for bounds of Pareto eigenvalue of square matrix, which were studied in \cite{See99}, to the case higher order tensors. In the square matrix case, i.e., $m=2$, it is clear that
$$
\alpha_n^1\leq\rho_n\leq\gamma^1_{2,n}\leq\delta_{2,n},
$$
which was presented in \cite{See99}. In the tensor case, i.e., $m\geq 3$, it is obvious that $\alpha_n^l\leq\rho_n$ and $\gamma^l_{m,n}\leq\delta_{m,n}$ for any $1\leq l\leq n$. Moreover, it is not difficult to verify that, if $l=n$ then
$
\gamma^l_{m,n}=n(m-1)^{n-1}\geq n2^{n-1}\geq \rho_n$;
if $1\leq l\leq n-1$, then $\gamma^l_{m,n}\geq [n(m-1)+1](m-1)^{n-2}\geq (2n+1)2^{n-2}\geq \rho_n$. Therefore, it always holds that
$$
\alpha_n^l\leq\rho_n\leq\gamma^l_{m,n}\leq\delta_{m,n}
$$
for any $1\leq l\leq n$.

\section{Numerical algorithm and simulations}
In this section, we first introduce an implementable algorithm for solving the TGEiCP. Then, we conduct some numerical results to verify the existence of the solution of TGEiCP and the reliability of our proposed algorithm.

\subsection{Numerical algorithm}\label{SecAlg}
It well known that the general nonlinear complementarity problem can also be transformed into a
system of equations. Therefore, it is of course possible to apply the {\it semismooth} and {\it smoothing} Newton methods to solve the problem under consideration in this paper. However, TGEiCP is more complicated than the classical EiCP due to the high-dimensional structure of tensor, thereby making such second-order algorithms difficult to be implemented. Motivated by the recent work in \cite{CS10} for solving matrix cone constrained eigenvalue problem, in this section, we extend the so-called {\it scaling-and-projection algorithm} (SPA), developed in \cite{CS10}, to solve \eqref{GECP} and follow the same name for TGEiCP. The corresponding algorithm can be described in Algorithm \ref{alg1}. Throughout this section, we assume that $\mathcal{B}$ is strictly $K$-positive, i.e., $\mathcal{B}x^m>0$ for any $x\in K\backslash\{0\}$.

\begin{algorithm}[!htb]
\caption{A Scaling-and-Projection Algorithm (SPA).}\label{alg1}
\begin{algorithmic}[1]
\STATE Take any starting point $u^{(0)}\in K\backslash\{0\}$, and define $x^{(0)}=u^{(0)}/\sqrt[m]{\B (u^{(0)})^m}$.
\FOR{$k=0,1,2,\cdots$}
\STATE One has a current point $x^{(k)}\in K\backslash\{0\}$. Compute
\[\label{step1}
\lambda_k=\frac{\A (x^{(k)})^m}{\B (x^{(k)})^m}~~~~{\rm and}~~~~y^{(k)}=\A (x^{(k)})^{m-1}-\lambda_k\B (x^{(k)})^{m-1}.\]
\STATE If $\|y^{(k)}\|=0$, then stop. Otherwise, let $s_k:=\|y^{(k)}\|$, and compute
\[\label{step2}
u^{(k)}=\Pi_K\left[x^{(k)}+s_ky^{(k)}\right]~~~~{\rm and}~~~~x^{(k+1)}=\frac{u^{(k)}}{\sqrt[m]{\B (u^{(k)})^m}}.\]
\ENDFOR
\end{algorithmic}
\end{algorithm}

It is easy to verify that iterative scheme \eqref{step1} always ensures $\langle x^{(k)},y^{(k)}\rangle=0$. As a consequence, $y^{(k)}\in K^*$ clearly means that $(x^{(k)},y^{(k)})$ is a solution of problem \eqref{GECP}. However, for the sake of convenience, we often use $\|y^{(k)}\|=0$ as the stopping condition in algorithmic framework instead of $y^{(k)}\in K^*$.


As we have mentioned, our proposed algorithm is a straightforward extension of \cite{CS10}, we can easily get the following convergence theorem. For the sake of simplicity, we skip the corresponding proof of Algorithm \ref{alg1}, those who are interested in are referred to \cite{CS10} for a similar proof.

\begin{theorem}\label{conveg_th}
Let the sequence $\{x^{(k)}\}$ be generated by Algorithm \ref{alg1} and further satisfy $\mathcal{B}(x^{(k)})^m=1$. Assume convergence of $\{x^{(k)}\}$ toward some limit that one denotes by $\bar x$. Then,
\begin{equation}\label{Barxy}
\lim_{k\rightarrow \infty}\lambda_k=\bar \lambda:=\frac{\mathcal{A}\bar x^m}{\mathcal{B}\bar x^m},~~~~\lim_{k\rightarrow \infty}y^{(k)}=\bar y:=\mathcal{A}\bar x^{m-1}-\bar \lambda\mathcal{B}\bar x^{m-1},
\end{equation}
and $(\bar \lambda, \bar x)$ is a solution of \eqref{GECP}.
\end{theorem}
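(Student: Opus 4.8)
The plan is to split the statement into a routine continuity part, giving the two limits in \eqref{Barxy}, and a more delicate part certifying that the limit point solves \eqref{GECP}. First I would note that every quantity in \eqref{step1} is a polynomial, hence continuous, function of the current iterate, so $x^{(k)}\to\bar x$ gives at once $\lambda_k\to\mathcal{A}\bar x^m/\mathcal{B}\bar x^m=\bar\lambda$ and $y^{(k)}\to\mathcal{A}\bar x^{m-1}-\bar\lambda\mathcal{B}\bar x^{m-1}=\bar y$. The normalization hypothesis $\mathcal{B}(x^{(k)})^m=1$ passes to the limit as $\mathcal{B}\bar x^m=1$, so $\bar x\neq 0$, and since each $x^{(k)}\in K$ with $K$ closed we get $\bar x\in K\setminus\{0\}$. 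Two of the three conditions in \eqref{GECP} are then immediate: $\bar x\in K\setminus\{0\}$ is the primal condition, and taking the limit of the identity $\langle x^{(k)},y^{(k)}\rangle=0$ (valid by the observation following Algorithm \ref{alg1}) yields the complementarity condition $\langle\bar x,\bar\lambda\mathcal{B}\bar x^{m-1}-\mathcal{A}\bar x^{m-1}\rangle=\langle\bar x,-\bar y\rangle=0$.

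The substance of the proof is the remaining dual-feasibility condition $-\bar y=\bar\lambda\mathcal{B}\bar x^{m-1}-\mathcal{A}\bar x^{m-1}\in K^*$, which I would extract from the projection step by using that $\bar x$ is a fixed point of the iteration. Set $\bar s=\|\bar y\|$ and assume first $\bar s>0$. Letting $k\to\infty$ in \eqref{step2}, continuity (nonexpansiveness) of $\Pi_K$ gives $u^{(k)}\to\bar u:=\Pi_K[\bar x+\bar s\,\bar y]$. Because $\langle\bar x+\bar s\,\bar y,\bar x\rangle=\|\bar x\|^2+\bar s\langle\bar y,\bar x\rangle=\|\bar x\|^2>0$ with $\bar x\in K$, the vector $\bar x+\bar s\,\bar y$ does not lie in the polar cone $-K^*$, so its projection $\bar u$ is nonzero (the projection onto $K$ vanishes exactly on $-K^*$); consequently $\mathcal{B}\bar u^m>0$ and the scaling relation survives in the limit as $\bar x=\bar u/\sqrt[m]{\mathcal{B}\bar u^m}$, that is $\bar u=c\,\bar x$ with $c:=\sqrt[m]{\mathcal{B}\bar u^m}>0$.

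The closing step pins down $c$ and reads off dual feasibility. For the closed convex cone $K$, the point $\bar u=\Pi_K[\bar x+\bar s\,\bar y]$ is characterized by $\bar u\in K$ together with $\langle\bar x+\bar s\,\bar y-\bar u,\bar u\rangle=0$ and $\langle\bar x+\bar s\,\bar y-\bar u,v\rangle\leq 0$ for all $v\in K$, i.e.\ $\bar x+\bar s\,\bar y-\bar u\in -K^*$. Substituting $\bar u=c\,\bar x$ into the orthogonality relation and invoking $\langle\bar x,\bar y\rangle=0$ to annihilate the cross term leaves $c(1-c)\|\bar x\|^2=0$; since $c>0$ and $\bar x\neq 0$, this forces $c=1$, hence $\bar u=\bar x$ and the residual collapses to $\bar x+\bar s\,\bar y-\bar u=\bar s\,\bar y\in -K^*$. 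Dividing by $\bar s>0$ yields $\bar y\in -K^*$, i.e.\ $-\bar y\in K^*$, the missing condition; when instead $\bar s=\|\bar y\|=0$ the conclusion is trivial since then $-\bar y=0\in K^*$. I expect this fixed-point analysis of the projection to be the main obstacle: the scaling factor $c$ is a priori unknown, and it is precisely the orthogonality $\langle x^{(k)},y^{(k)}\rangle=0$ hard-wired into \eqref{step1} that, in the limit, lets the projection's own orthogonality relation force $c=1$ and thereby turn the projection residual into the required certificate $-\bar y\in K^*$.
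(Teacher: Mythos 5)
Your proof is correct, and it supplies exactly what the paper omits: the paper gives no proof of Theorem \ref{conveg_th} at all, explicitly skipping it and referring the reader to \cite{CS10}, and your argument is precisely the tensor adaptation of that scaling-and-projection analysis --- pass to the limit in \eqref{step1}--\eqref{step2}, write $\bar u=\Pi_K\left[\bar x+\bar s\,\bar y\right]=c\,\bar x$ with $c>0$ (where $\bar u\neq 0$ follows from $\langle \bar x+\bar s\,\bar y,\bar x\rangle=\|\bar x\|^2>0$, and $\mathcal{B}\bar u^m>0$ from the standing strict $K$-positivity assumption of Section 5), then use the conic projection characterization together with $\langle\bar x,\bar y\rangle=0$ to force $c=1$, so that the projection residual $\bar s\,\bar y\in -K^*$ certifies dual feasibility. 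Your handling of the degenerate case $\bar y=0$, of the complementarity condition via the limit of $\langle x^{(k)},y^{(k)}\rangle=0$, and of the identification $\bar x=\bar u/\sqrt[m]{\mathcal{B}\bar u^m}$ by continuity at $\bar u\neq 0$ is sound, so nothing is missing.
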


\begin{remark}\label{Remark4}
As mentioned in \cite{CS10}, if $K$ has a complicated structure, then computing $u^{(k)}$ in Algorithm \ref{alg1} is not an easy task. 
However, there are many interesting cones for
which the projection map admits an explicit and easily computable formula. This
is true, for instance, for the Pareto cone, for the Loewner cone
of positive semidefinite symmetric matrices, for the Lorentz cone and,
more generally, for any revolution cone. Therefore Algorithm \ref{alg1} is easily implemented as long as the projection onto $K$ is easy enough to computed explicitly.
\end{remark}

\begin{remark}\label{Remark5}
The tensors $\mathcal{A}$ and $\mathcal{B}$ considered above are not necessarily symmetric. If $K=\mathbb{R}_+^n$ and the tensors $\mathcal{A}$ and $\mathcal{B}$ are both symmetric, then the symmetric TGEiCP can be solved by computing a stationary point of the nonlinear
program \eqref{OTEiCP}. The constraint set of this program is the simplex $S$ defined
by \eqref{Constrset}. The special structure of this set $S$ makes the computation of projections of vectors over
$S$ very easy. On the other hand, the objective function of the required nonlinear program has
Hessian whose computation is quite involved. These features lead to our decision of investigating
first order algorithms that are based on gradients and projections. This will be our investigation task in future.
\end{remark}

\subsection{Numerical simulations}
We have theoretically discussed the existence of the solution of TGEiCP in Section \ref{SecExist} and introduced an implementable projection method to solve the problem under consideration in Section \ref{SecAlg}. Thus, in this section, we aim at verifying that our theoretical results are true, in addition to demonstrating the reliability of the proposed algorithm. We implement Algorithm \ref{alg1} by {\sc Matlab} R2012b and conduct the numerical simulations on a Lenovo notebook with Inter(R) Core(TM) i5-2410M CPU
2.30GHz and 4GB RAM running on Windows 7 Home Premium operating system.

In our experiments, we concentrate on three concrete TGEiCPs with symmetric structure and only list the details of tensors $\A$ and $\B$ in the ensuing examples.
\begin{example}\label{exam1}
We consider two $4$-th order $2$-dimensional symmetric tensors $\A$ and $\B$, where the tensor $\A$ is specified as
$$\A(:,:,1,1)=\lb\begin{array}{ccc}
0.8147 && 0.5164 \\ 0.5164 && 0.9134
\end{array}\rb,\quad \A(:,:,1,2)=\lb\begin{array}{ccc}
0.4218 &&  0.8540 \\    0.8540  && 0.9595
\end{array}\rb, $$
$$\A(:,:,2,1)=\lb\begin{array}{ccc}
 0.4218 && 0.8540 \\    0.8540  && 0.9595
\end{array}\rb,\quad \A(:,:,1,2)=\lb\begin{array}{ccc}
  0.6787 && 0.7504 \\    0.7504 && 0.3922
\end{array}\rb, $$
and the tensor $\B$ is specified as
$$\B(:,:,1,1)=\lb\begin{array}{ccc}
  1.6324 &&  1.1880 \\    1.1880  && 1.5469
\end{array}\rb,\quad \B(:,:,1,2)=\lb\begin{array}{ccc}
    1.6557 && 1.4424 \\    1.4424  && 1.9340
\end{array}\rb, $$
$$\B(:,:,2,1)=\lb\begin{array}{ccc}
1.6557 && 1.4424 \\   1.4424  && 1.9340
\end{array}\rb,\quad \B(:,:,1,2)=\lb\begin{array}{ccc}
    1.6555 && 1.4386\\    1.4386 && 1.0318
\end{array}\rb. $$
\end{example}

\begin{example}\label{exam2}
This example considers two $4$-th order $3$-dimensional symmetric tensors $\A$ and $\B$, where $\A$ and $\B$ are specified as follows:
$$\A(:,:,1,1)=\lb\begin{array}{ccccc}
  0.6229  &&  0.2644  &&  0.3567 \\
    0.2644  &&  0.0475  && 0.7367\\
    0.3567  &&  0.7367 &&   0.1259
\end{array}\rb,\; \A(:,:,1,2)=\lb\begin{array}{ccccc}
  0.7563  &&  0.5878  &&  0.5406\\
    0.5878  &&  0.1379  &&  0.0715\\
    0.5406  &&  0.0715  &&  0.3725
\end{array}\rb, $$
$$\A(:,:,1,3)=\lb\begin{array}{ccccc}
    0.0657  &&  0.4918  &&  0.9312\\
    0.4918  &&  0.7788  &&  0.9045\\
    0.9312  &&  0.9045  &&  0.8711
\end{array}\rb,\; \A(:,:,2,1)=\lb\begin{array}{ccccc}
    0.7563  && 0.5878  &&  0.5406\\
    0.5878  &&  0.1379 &&  0.0715\\
    0.5406  &&  0.0715 &&  0.3725
\end{array}\rb, $$
$$\A(:,:,2,2)=\lb\begin{array}{ccccc}
    0.7689  &&  0.3941  &&  0.6034\\
    0.3941  &&  0.3577  &&  0.3465\\
    0.6034  &&  0.3465  &&  0.4516
\end{array}\rb,\; \A(:,:,2,3)=\lb\begin{array}{ccccc}
    0.8077 &&   0.4910  &&  0.2953\\
    0.4910 &&   0.5054  &&  0.5556\\
    0.2953 &&   0.5556  &&  0.9608
\end{array}\rb, $$
$$\A(:,:,3,1)=\lb\begin{array}{ccccc}
     0.0657  &&  0.4918  &&  0.9312\\
    0.4918  &&  0.7788 &&   0.9045\\
    0.9312  &&  0.9045 &&   0.8711
\end{array}\rb,\; \A(:,:,3,2)=\lb\begin{array}{ccccc}
 0.8077  &&  0.4910  &&  0.2953\\
    0.4910  &&  0.5054  &&  0.5556\\
    0.2953  &&  0.5556  &&  0.9608
\end{array}\rb, $$
$$\A(:,:,3,3)=\lb\begin{array}{ccccc}
    0.7581 &&   0.7205 &&   0.9044\\
    0.7205 &&   0.0782 &&   0.7240\\
    0.9044 &&   0.7240 &&   0.3492
\end{array}\rb,\; \B(:,:,1,1)=\lb\begin{array}{ccccc}
    0.6954  &&  0.4018  &&  0.1406\\
    0.4018  &&  0.9957  &&  0.0483\\
    0.1406  &&  0.0483  &&  0.0988
\end{array}\rb, $$
$$\B(:,:,1,2)=\lb\begin{array}{ccccc}
    0.6730 &&   0.5351 &&   0.4473\\
    0.5351 &&   0.2853 &&   0.3071\\
    0.4473 &&   0.3071 &&   0.9665
\end{array}\rb,\; \B(:,:,1,3)=\lb\begin{array}{ccccc}
    0.7585 &&   0.6433 &&  0.2306\\
    0.6433 &&   0.8986 &&   0.3427\\
    0.2306 &&   0.3427 &&   0.5390
\end{array}\rb, $$
$$\B(:,:,2,1)=\lb\begin{array}{ccccc}
    0.6730  &&  0.5351 &&   0.4473\\
    0.5351  &&  0.2853 &&   0.3071\\
    0.4473  &&  0.3071 &&   0.9665
\end{array}\rb,\; \B(:,:,2,2)=\lb\begin{array}{ccccc}
    0.3608  &&   0.3914 &&   0.5230\\
    0.3914  &&  0.6822  &&  0.5516\\
    0.5230  &&  0.5516  &&  0.7091
\end{array}\rb, $$
$$\B(:,:,2,3)=\lb\begin{array}{ccccc}
    0.4632 &&   0.2043 &&   0.2823\\
    0.2043 &&   0.7282 &&   0.7400\\
    0.2823 &&   0.7400 &&   0.9369
\end{array}\rb,\; \B(:,:,3,1)=\lb\begin{array}{ccccc}
    0.7585 &&   0.6433 &&   0.2306\\
    0.6433 &&   0.8986 &&   0.3427\\
    0.2306 &&   0.3427 &&   0.5390
\end{array}\rb, $$
$$\B(:,:,3,2)=\lb\begin{array}{ccccc}
    0.4632  &&  0.2043 &&   0.2823\\
    0.2043  &&  0.7282 &&   0.7400\\
    0.2823  &&  0.7400 &&   0.9369
\end{array}\rb,\; \B(:,:,3,3)=\lb\begin{array}{ccccc}
    0.8200  &&  0.5914  &&  0.4983\\
    0.5914  &&  0.0762  &&  0.2854\\
    0.4983  &&  0.2854  &&  0.1266
\end{array}\rb.$$
\end{example}

\begin{example}\label{exam3}
This example also considers two $4$-th order $3$-dimensional symmetric tensors $\A$ and $\B$, where $\A$ and $\B$ take their components as follows:
$$\A(:,:,1,1)=\lb\begin{array}{ccccc}
    0.4468  &&  0.4086  &&  0.5764\\
    0.4086  &&  0.8176  &&  0.5867\\
    0.5764  &&  0.5867  &&  0.8116
\end{array}\rb,\; \A(:,:,1,2)=\lb\begin{array}{ccccc}
    0.2373  &&  0.5028  &&  0.7260\\
    0.5028  &&  0.5211  &&  0.4278\\
    0.7260  &&  0.4278  &&  0.6791
\end{array}\rb, $$
$$\A(:,:,1,3)=\lb\begin{array}{ccccc}
    0.0424 &&   0.0841  &&  0.6220\\
    0.0841 &&   0.8181  &&  0.4837\\
    0.6220 &&   0.4837  &&  0.6596
\end{array}\rb,\; \A(:,:,2,1)=\lb\begin{array}{ccccc}
    0.2373 &&   0.5028 &&   0.7260\\
    0.5028 &&   0.5211 &&   0.4278\\
    0.7260 &&   0.4278 &&   0.6791
\end{array}\rb, $$
$$\A(:,:,2,2)=\lb\begin{array}{ccccc}
    0.3354 &&   0.7005 &&   0.3154\\
    0.7005 &&   0.1068 &&   0.7164\\
    0.3154 &&   0.7164 &&   0.7150
\end{array}\rb,\; \A(:,:,2,3)=\lb\begin{array}{ccccc}
    0.1734 &&   0.5972 &&   0.6791\\
    0.5972 &&   0.0605 &&   0.4080\\
    0.6791 &&   0.4080 &&   0.6569
\end{array}\rb, $$
$$\A(:,:,3,1)=\lb\begin{array}{ccccc}
    0.0424  &&  0.0841  &&  0.6220\\
    0.0841  &&  0.8181  &&  0.4837\\
    0.6220  &&  0.4837  &&  0.6596
\end{array}\rb,\; \A(:,:,3,2)=\lb\begin{array}{ccccc}
    0.1734 &&   0.5972 &&   0.6791\\
    0.5972 &&   0.0605 &&   0.4080\\
    0.6791 &&   0.4080 &&   0.6569
\end{array}\rb, $$
$$\A(:,:,3,3)=\lb\begin{array}{ccccc}
    0.4897 &&   0.6299 &&   0.6104\\
    0.6299 &&   0.0527 &&   0.5803\\
    0.6104 &&   0.5803 &&   0.5479
\end{array}\rb,\; \B(:,:,1,1)=\lb\begin{array}{ccccc}
    2.5328 &&   2.6133 &&   2.7630\\
    2.6133 &&   2.5502 &&   2.4151\\
    2.7630 &&   2.4151 &&   2.3012
\end{array}\rb, $$
$$\B(:,:,1,2)=\lb\begin{array}{ccccc}
    2.3955 &&   2.2026 &&   2.8921\\
    2.2026 &&   2.8852 &&   2.5060\\
    2.8921 &&   2.5060 &&   2.2619
\end{array}\rb,\; \B(:,:,1,3)=\lb\begin{array}{ccccc}
    2.5186 &&   2.8867 &&   2.7372\\
    2.8867 &&   2.4538 &&   2.2579\\
    2.7372 &&   2.2579 &&   2.1332
\end{array}\rb, $$
$$\B(:,:,2,1)=\lb\begin{array}{ccccc}
    2.3955 &&   2.2026  &&  2.8921\\
    2.2026 &&   2.8852  &&  2.5060\\
    2.8921 &&   2.5060  &&  2.2619
\end{array}\rb,\; \B(:,:,2,2)=\lb\begin{array}{ccccc}
    2.9037 &&   2.7948 &&   2.5391\\
    2.7948 &&   2.1978 &&   2.2653\\
    2.5391 &&   2.2653 &&   2.4799
\end{array}\rb, $$
$$\B(:,:,2,3)=\lb\begin{array}{ccccc}
    2.6280  &&  2.1537 &&   2.2689\\
    2.1537  &&  2.9841 &&   2.2698\\
    2.2689  &&  2.2698 &&   2.1981
\end{array}\rb,\; \B(:,:,3,1)=\lb\begin{array}{ccccc}
    2.5186  &&  2.8867 &&   2.7372\\
    2.8867  &&  2.4538 &&   2.2579\\
    2.7372  &&  2.2579 &&   2.1332
\end{array}\rb, $$
$$\B(:,:,3,2)=\lb\begin{array}{ccccc}
    2.6280  &&  2.1537 &&   2.2689\\
    2.1537  &&  2.9841 &&   2.2698\\
    2.2689  &&  2.2698 &&   2.1981
\end{array}\rb,\; \B(:,:,3,3)=\lb\begin{array}{ccccc}
    2.9427  &&  2.3596  &&  2.7611\\
    2.3596  &&  2.7011  &&  2.6822\\
    2.7611  &&  2.6822  &&  2.6665
\end{array}\rb.$$
\end{example}

Note that the stopping criterion in Algorithm \ref{alg1} is $\|y^{(k)}\|=0$ for exactly solving TGEiCP. In practical implementation, we usually use
\[\label{error}{\rm RelErr:}=\|y^{(k)}\|:=\|\A (x^{(k)})^{m-1}-\lambda_k\B (x^{(k)})^{m-1}\|\leq {\rm Tol}\]
as the termination criterion to pursue an approximate solution with a preset tolerance `${\rm Tol}$'. Now, we test three scenarios of `${\rm Tol}$' by setting ${\rm Tol}:=\left\{5\cdot 10^{-3}\right.$, $10^{-3}$, $\left.5\cdot 10^{-4}\right\}$. We consider two cases of the starting point $u^{(0)}$, where the first case is a vector of ones, i.e., $u^{(0)}=(1,\cdots,1)^\t$, and the second one is a random vector uniformly distributed in $(0,1)$, (the corresponding {\sc Matlab} script is \verb"rand(n,1)"). To demonstrate the reliability of Algorithm \ref{alg1}, we report the number of iterations (`Iter.'), computing time in seconds (`Time'), the relative error (`RelErr') defined by \eqref{error}, eigenvalue (`EigValue') and the corresponding eigenvector (`EigVector'). The computational results with respect to different initial points are summarized in Tables \ref{table1} and \ref{table2}, respectively.

\begin{table}[!htb]
\begin{center}
\caption{Computational results with starting point $(1,\cdots,1)^\t$.}\vskip 0.2mm
\label{table1}
\def\temptablewidth{1\textwidth}
\begin{tabular*}{\temptablewidth}{@{\extracolsep{\fill}}lcccccc}\toprule
Example & Tol & Iter. & Time & RelErr & EigValue & EigVector\\\midrule
Example \ref{exam1}& 5.0e-03 & 657 & 0.17 & 5.007e-03 & 0.4859 & $(0.2697,0.6407)^\t$ \\
Example \ref{exam2}& 5.0e-03 & 231 & 0.06 & 5.006e-03 & 1.5609 & $(0.2168,0.1532,0.8774)^\t$ \\
Example \ref{exam3}& 5.0e-03 & 536 & 0.14 & 5.005e-03 & 0.2189 & $(0.0630,0.0000,0.7236)^\t$ \\   \midrule
Example \ref{exam1}& 1.0e-03 & 3211 & 0.78 & 1.000e-03 & 0.4850 & $(0.2601,0.6512)^\t$ \\
Example \ref{exam2}& 1.0e-03 & 1703 & 0.44 & 1.001e-03 & 1.5512 & $(0.2194,0.1576,0.8683)^\t$ \\
Example \ref{exam3}& 1.0e-03 & 2584 & 0.65 & 1.000e-03 & 0.2173 & $(0.0542,0.0000,0.7319)^\t$ \\ \midrule
Example \ref{exam1}& 5.0e-04 & 6367 & 1.61 & 5.001e-04 & 0.4849 & $(0.2589,0.6525)^\t$ \\
Example \ref{exam2}& 5.0e-04 & 2929 & 0.77 & 5.002e-04 & 1.5514 & $(0.2199,0.1575,0.8678)^\t$ \\
Example \ref{exam3}& 5.0e-04 & 5293 & 1.52 & 5.000e-04 & 0.2171 & $(0.0530,0.0000,0.7330)^\t$ \\ \bottomrule
\end{tabular*}
\end{center}
\end{table}

\begin{table}[!htb]
\begin{center}
\caption{Computational results with a random starting point.}\vskip 0.2mm
\label{table2}
\def\temptablewidth{1\textwidth}
\begin{tabular*}{\temptablewidth}{@{\extracolsep{\fill}}lcccccc}\toprule
Example & Tol & Iter. & Time & RelErr & EigValue & EigVector\\\midrule
Example \ref{exam1}& 5.0e-03 & 277 & 0.09 & 5.005e-03 & 0.4859 & $(0.2697,0.6407)^\t$ \\
Example \ref{exam2}& 5.0e-03 & 291 & 0.09 & 5.003e-03 & 1.5464 & $(0.2172,0.1600,0.8673)^\t$ \\
Example \ref{exam3}& 5.0e-03 & 519 & 0.14 & 5.003e-03 & 0.2189 & $(0.0623,0.0008,0.7234)^\t$ \\  \midrule
Example \ref{exam1}& 1.0e-03 & 3218 & 0.80 & 1.000e-03 & 0.4850 & $(0.2601,0.6512)^\t$ \\
Example \ref{exam2}& 1.0e-03 & 1613 & 0.43 & 1.001e-03 & 1.5511 & $(0.2195,0.1577,0.8680)^\t$ \\
Example \ref{exam3}& 1.0e-03 & 2636 & 0.70 & 1.000e-03 & 0.2173 & $(0.0542,0.0000,0.7319)^\t$ \\  \midrule
Example \ref{exam1}& 5.0e-04 & 6071 & 1.54 & 5.000e-04 & 0.4847 & $(0.2565,0.6551)^\t$ \\
Example \ref{exam2}& 5.0e-04 & 2341 & 0.64 & 5.002e-04 & 1.5510 & $(0.2203,0.1576,0.8672)^\t$ \\
Example \ref{exam3}& 5.0e-04 & 5341 & 1.41 & 5.001e-04 & 0.2171 & $(0.0530,0.0000,0.7330)^\t$ \\  \bottomrule
\end{tabular*}
\end{center}
\end{table}

From the data reported in Tables \ref{table1} and \ref{table2}, it is clear that our Algorithm \ref{alg1} can successfully solve the TGEiCP, even though it seems that the number of iterations increases significantly as the decrease of tolerance `${\rm Tol}$'. Actually, we tested a series of random starting points, and observed that random starting points often perform better than the deterministic vector of ones in terms of taking less iterations as reported in Table \ref{table2}. However, all experiments show that Algorithm \ref{alg1} is reliable for solving TGEiCP.

Taking a revisit on Algorithm \ref{alg1}, the iterative scheme \eqref{step2} plays an significant role in the whole algorithm. In other words, the projection step given in \eqref{step2} dominates the main task of Algorithm \ref{alg1}. As we know, the typical projection methods consist of two important components, i.e., step size and search direction. In Algorithm \ref{alg1}, $s_k$ and $y^{(k)}$ serve as the step size and search direction, respectively. It is well known that good choices of step size and search direction may lead to promising numerical performance. Turn our attention to \eqref{step2}, it can be easily seen that step size $s_k$ approaches to zero as the sequence $\{x^{(k)}\}$ gets close to a solution of TGEiCP, thereby reducing the speed of convergence of Algorithm \ref{alg1}. A naturally simple idea is to increase $s_k$ by attaching a larger constant $\alpha$ to it, that is, the projection step in \eqref{step2} turns out to be
\[\label{projnew}u^{(k)}=\Pi_K\left[x^{(k)}+\alpha s_ky^{(k)}\right].\]
In our experiments, we observe that Algorithm \ref{alg1} could be accelerated greatly when we set $\alpha\in(1,8)$.
We also report some computational results in Table \ref{table3}.

\begin{table}[!htb]
\begin{center}
\caption{Computational results with starting point $(1,\cdots,1)^\t$ and $\alpha=5$ in \eqref{projnew}.}\vskip 0.2mm
\label{table3}
\def\temptablewidth{1\textwidth}
\begin{tabular*}{\temptablewidth}{@{\extracolsep{\fill}}lcccccc}\toprule
Example & Tol & Iter. & Time & RelErr & EigValue & EigVector\\\midrule
Example \ref{exam1}& 5.0e-03 & 130 & 0.04 & 5.012e-03 & 0.4859 & $(0.2696,0.6407)^\t$ \\
Example \ref{exam2}& 5.0e-03 & 62 & 0.02 & 5.006e-03 & 1.5472 & $(0.2168,0.1597,0.8682)^\t$ \\
Example \ref{exam3}& 5.0e-03 & 105 & 0.03 & 5.010e-03 & 0.2189 & $(0.0629,0.0000,0.7236)^\t$ \\ \midrule
Example \ref{exam1}& 1.0e-03 & 639 & 0.17 & 1.001e-03 & 0.4850 & $(0.2601,0.6512)^\t$ \\
Example \ref{exam2}& 1.0e-03 & 230 & 0.07 & 1.001e-03 & 1.5501 & $(0.2204,0.1580,0.8664)^\t$ \\
Example \ref{exam3}& 1.0e-03 & 513 & 0.13 & 1.001e-03 & 0.2173 & $(0.0542,0.0000,0.7319)^\t$ \\
\midrule
Example \ref{exam1}& 5.0e-04 & 1270 & 0.32 & 5.002e-04 & 0.4849 & $(0.2589,0.6525)^\t$ \\
Example \ref{exam2}& 5.0e-04 & 549 & 0.14 & 5.003e-04 & 1.5513 & $(0.2207,0.1574,0.8669)^\t$ \\
Example \ref{exam3}& 5.0e-04 & 1054 & 0.28 & 5.003e-04 & 0.2171 & $(0.0530,0.0000,0.7330)^\t$ \\  \midrule
Example \ref{exam1}& 1.0e-04 & 6297 & 1.55 & 1.000e-04 & 0.4848 & $(0.2579,0.6536)^\t$ \\
Example \ref{exam2}& 1.0e-04 & 3227 & 0.84 & 1.000e-04 & 1.5520 & $(0.2203,0.1571,0.8679)^\t$ \\
Example \ref{exam3}& 1.0e-04 & 6332 & 1.65 & 1.000e-04 & 0.2170 & $(0.0518,0.0005,0.7337)^\t$ \\\bottomrule
\end{tabular*}
\end{center}
\end{table}

By comparing the results in Tables \ref{table1} and \ref{table3}, it is apparent that the refined projection step \eqref{projnew} outperforms the original one in \eqref{step2} in terms of taking much less iterations. In Fig. \ref{fig1}, we further consider two different projection steps, and graphically plot the evolutions of the relative error defined by \eqref{error} in the logarithmic sense, i.e., $\log(\|y^{(k)}\|)$, with respect to iterations, where the stopping tolerance `Tol' is set to be ${\rm Tol:}=10^{-4}$.

\begin{figure}[!htbp]
\includegraphics[width=0.49\textwidth]{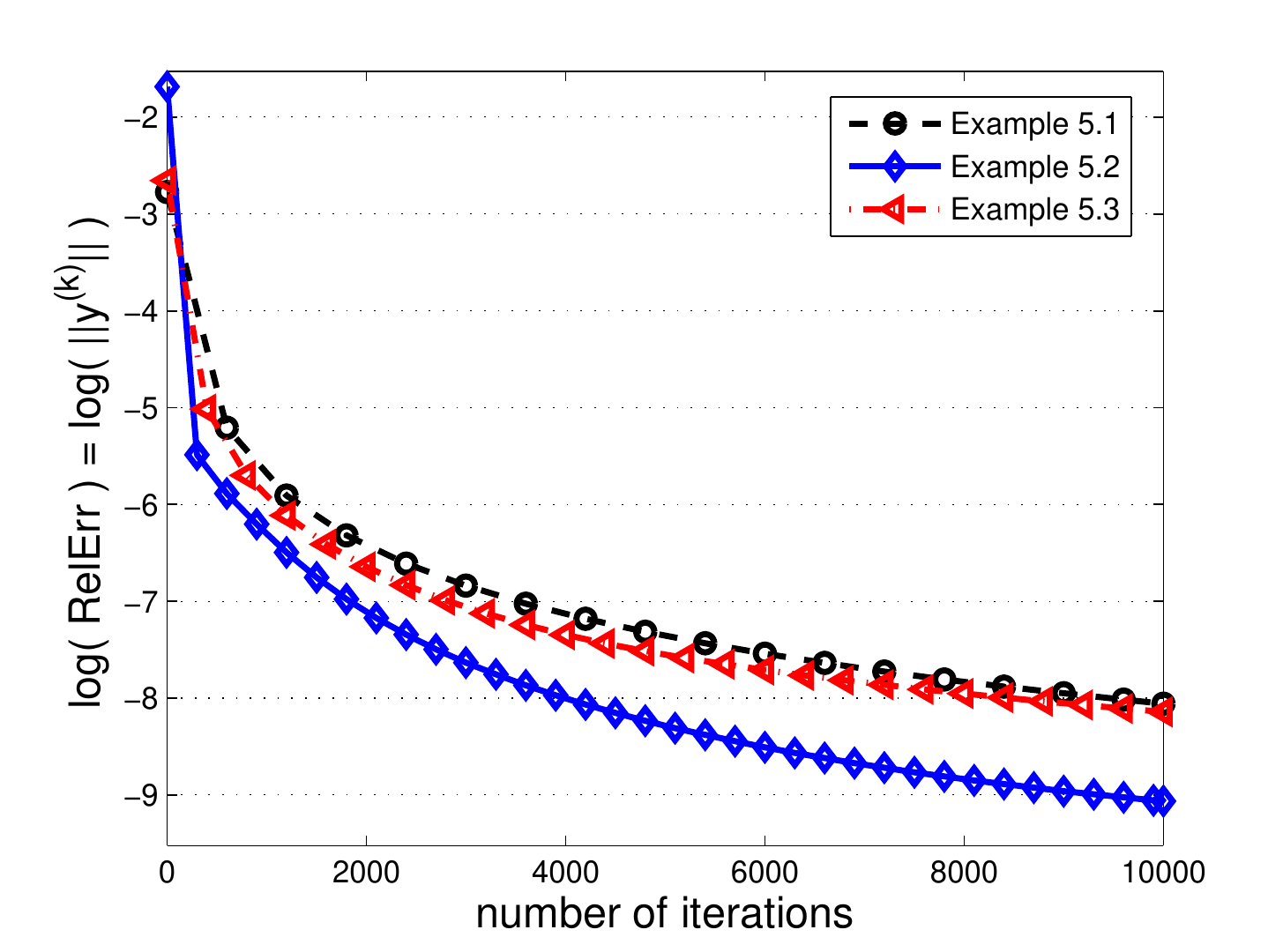}
\includegraphics[width=0.49\textwidth]{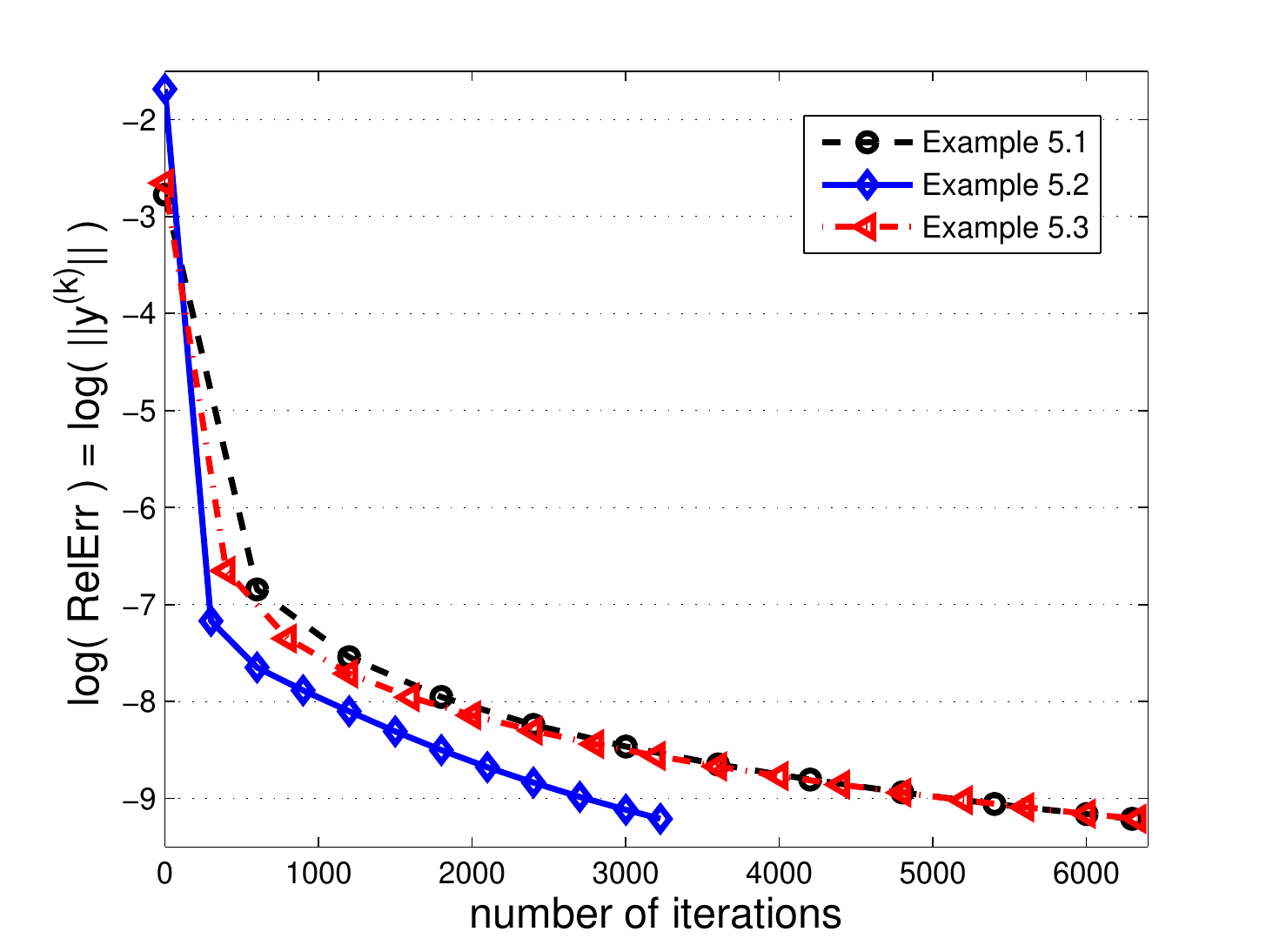}
\caption{Evolutions of `RelErr' defined by \eqref{error} with respect to iterations. The left plot corresponds to the original projection scheme, i.e., $\alpha=1$. The right one is corresponding to \eqref{projnew} with $\alpha=5$.}\label{fig1}
\end{figure}

It is clear from the above results that attaching a relaxation factor $\alpha$ in \eqref{projnew} is necessary to improve the numerical performance of our algorithm. In future work, we will introduce a self-adaptive strategy to adjust the relaxation factor $\alpha$ for an acceleration of the proposed method.

\section{Conclusions}\label{SecCon}
This paper considers the TGEiCP with symmetric structure, which is an interesting generalization of matrix eigenvalue complementarity problem. To the best of our knowledge, the development of TGEiCP is in its infancy and such a problem has been received much less attention. In this paper, we discuss the existence of the solution of TGEiCP under some conditions, in addition to presenting two equivalent optimization reformulations for the purpose of analyzing the upper bound of cone eigenvalues of tensors. The bounds of the number of eigenvalues of TGEiCP are also presented. Finally, we develop a first-order projection method which might be a better candidate for TGEiCP than second-order solvers. Note that we only consider the optimization reformulations of symmetric tensors, and many problems are lack of such a symmetric structure. Hence, our future work will further study TGEiCPs in absence of symmetric property. On the other hand, our numerical simulations show us that the attached $\alpha$ in \eqref{projnew} is important for algorithmic acceleration. Then, how to improve the numerical performance of Algorithm \ref{alg1} is also one of our future concerns.

\medskip
\begin{acknowledgements}
The first two authors were supported by National Natural Science Foundation of China (NSFC) at Grant Nos. (11171083, 11301123) and the Zhejiang Provincial NSFC at Grant No. LZ14A010003. The third author was supported by the Hong Kong Research Grant Council (Grant Nos. PolyU 502510, 502111, 501212 and 501913).
\end{acknowledgements}


\end{document}